\documentclass{amsart}

\usepackage{amssymb}
\usepackage{mathrsfs}
\usepackage{amscd}
\usepackage[normalem]{ulem}
\usepackage[pagebackref]{hyperref}
\usepackage{graphicx}
\usepackage[curve]{xypic}


\let\cal\mathcal
\renewcommand{\setminus}{\smallsetminus}
\newcommand\R{{\mathbb R}}
\newcommand\C{{\mathbb C}}
\newcommand{\grassmann}{\mathbf{G}}
\newcommand{\norm}[1]{\lVert #1 \rVert}

\newtheorem{theorem}{Theorem}[section]
\newtheorem{proposition}[theorem]{Proposition}

\newtheorem{lemma}[theorem]{Lemma}

\theoremstyle{definition}
\newtheorem{example}[theorem]{Example}
\newtheorem{remark}[theorem]{Remark}
\newtheorem{definition}[theorem]{Definition}

\newcommand{\proj}{\operatorname{proj}}
\newcommand{\Res}{\operatorname{Res}}
\newcommand{\mult}{\operatorname{mult}}

\begin{document}
\title[Lipschitz normal embedding among superisolated singularities]{Lipschitz normal embedding among superisolated singularities}

\author{Filip Misev}
\address{Max Planck Institute for Mathematics, Vivatsgasse~7, 53111~Bonn, Germany} \email{fmisev@mpim-bonn.mpg.de}

\author{Anne Pichon}
\address{Aix Marseille Universit\'e, CNRS, Centrale Marseille, I2M, UMR 7373, 13453 Marseille, FRANCE}
\email{anne.pichon@univ-amu.fr}

\subjclass[2010]{14B05, 32S25, 32S05, 57M99}
\keywords{Lipschitz geometry, superisolated surface singularity, Lipschitz normal embedding}

\begin{abstract}
Any germ of a complex analytic space is equipped with two natural metrics: the outer metric induced by the hermitian metric of the ambient space and the inner metric, which is the associated riemannian metric on the germ. A complex analytic germ is said Lipschitz normally embedded (LNE) if its outer and inner metrics are bilipschitz equivalent. LNE seems to be fairly rare among surface singularities; the only known LNE surface germs outside the trivial case (straight cones) are the minimal singularities. In this paper, we show that a superisolated hypersurface singularity is LNE if and only if its projectivized tangent cone has only ordinary singularities. This provides an infinite family of LNE singularities which is radically different from the class of minimal singularities.
\end{abstract}

\maketitle

\section{Introduction}

If $(X,0)$ is a germ of a
complex variety, then any embedding $\phi\colon(X,0)\hookrightarrow
(\C^n,0)$ determines two metrics on $(X,0)$: the outer metric
$$d_o({x,y}):=\norm{{\phi(x)-\phi(y)}}\qquad\text{(i.e., distance in $\C^n$)}$$
and the inner metric
$$d_i(x,y):=\inf\{\mathop{\rm length}(\phi\circ\gamma):
\gamma\ \text{is a rectifiable path in}\ X\ \text{from}\ x\ \text{to}\ y\},$$
using the riemannian metric on $X \setminus \{0\}$ induced by the hermitian metric of $\C^n$. For all $x,y \in X, \ \norm{x-y} \leq d_i(x,y)$.

\begin{definition} A germ of a complex normal variety $(X,0)$ is \emph{Lipschitz normally embedded} (LNE) if the identity map of $(X,0)$ is a bilipschitz homeomorphism between inner and outer metrics, i.e., there exists a neighborhood $U$ of $0$ in $X$ and a constant $K\geq 1$ such that for all $x,y \in U$
$$d_i(x,y) \leq K d_o(x,y).$$
\end{definition}
 
Lipschitz Normal Embedding (LNE) is a very active research area with many recent results, e.g., by Birbrair, Fernandes, Kerner,
 Mendes, Neumann, Nu\~no-Ballesteros,  Pedersen, Pichon, Ruas, Sampaio (\cite{BMN}, \cite{FS}, \cite{KMR}, \cite{MR}, \cite{NPP2}), including a characterization of LNE  for semialgebraic sets (\cite{BM1}) and a characterization of LNE for complex surfaces (\cite{NPP1}).

If $(X,0)$ is a curve germ then it is in fact bilipschitz equivalent to the metric cone over its link with respect to the inner metric, while the data of  its Lipschitz outer geometry is equivalent to that of the embedded topology of a generic plane projection (see \cite{PT}, \cite{F}, \cite{NP}). Therefore, an irreducible complex curve is LNE if and only if it is smooth. This is no longer true in higher dimension.

For example, if $f_d \in \C[x,y,z]$ is a homogeneous polynomial of degree $d\geq 2$ such that the associated projective curve $\{f_d=0\} \subset \mathbb{P}^2$ is smooth,  then it is easy to prove that the hypersurface in $\C^3$ with isolated singularity defined by the equation $f_d(x,y,z)=0$, i.e. the straight cone over the projective curve $f_d=0$,  is LNE.  A~natural question is then to characterize all the isolated surface singularities which are LNE.

It turns out that it is difficult to find examples of complex isolated singularities with dimension $\geq 2$ which are LNE but which are not  conical. In \cite{NPP2}, it is shown that such examples exist: every minimal surface singularity is LNE. This was the first known infinite class of LNE isolated surface singularities which are not  conical.

In the present paper, we prove that the class of superisolated singulaties in $(\C^3,0)$ contains an infinite amount of non metrically conical isolated singularities which are LNE and we describe them precisely (Theorem \ref{th:main}).  This class of examples is very different from the class of minimal  singularities. Indeed,  in general, a minimal singularity has (by definition) a large embedding dimension. Among them, the only  hypersurfaces in $(\C^3,0)$  are the singularities $A_k, k \geq 1$.

\begin{definition}
A hypersurface singularity $(X,0) \subset (\C^3,0)$ is {\it superisolated} if it is given by an equation 
$$ f_{d} (x,y,z)+f_{d+1} (x,y,z)+f_{d+2}(x,y,z)+\ldots=0,$$
where $d \geq 2$, for all $k\geq d$, $f_k$ is a homogeneous polynomial of degree $k$ and the projective curve $\{f_{d+1}=0 \} \subset \mathbb{P}^2$ does not contain any singular point of the projectivized tangent cone   $C_0X= \{[x:y:z] \in \mathbb{P}^2 \colon f_d (x,y,z)=0\}$. In particular, the curve  $C_0X$ is reduced. 
\end{definition}  

The embedded topology of a superisolated singularity is completely determined by the combinatorial type of the projective curve $f_d=0$, i.e. by the topology of a small tubular neighbourhood of $f_d=0$ in $\mathbb{P}^2$. Then, when considering only embedded topology, we can assume that the equation is  simply $f_d(x,y,z) + l^{d+1} =0$ where $l(x,y,z) = ax+by+cz$ is a generic linear form.

In this paper, we consider the outer Lipschitz geometry  of superisolated surface singularities with equations of the form:
$$ f_{d} (x,y,z)+f_{d+1} (x,y,z)=0, $$
i.e., $f_{d+k} =0$ for all $k \geq 2$. 

\begin{definition} \label{def:ordinary}
A reduced complex curve singularity $(C,p)$ is \emph{ordinary} if it consists of $r$ smooth curve germs having $r$ distinct tangent lines at $p$.
\end{definition} 

\begin{theorem} \label{th:main} A superisolated singularity  $(X,0) \subset (\C^3,0)$ with equation $f_{d} +f_{d+1} =0$  is LNE if and only if  its projectivized tangent cone $C_0X$ has only ordinary singularities.
\end{theorem}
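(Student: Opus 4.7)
The plan is to analyze $(X,0)$ via the blowup $\pi\colon \widetilde X\to X$ of the origin, whose exceptional divisor is canonically the tangent cone curve $C_0X\subset\mathbb{P}^2$ and whose singular locus lies above the singular points of $C_0X$. I would treat the two implications separately.

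For the ``only if'' direction, suppose $C_0X$ has a non-ordinary singularity at a point $p$: either two of its branches through $p$ share a tangent line, or at least one branch at $p$ is itself singular (and hence has a non-reduced tangent cone in $\mathbb{P}^2$). In either case I would exhibit two real-analytic arcs $\gamma_1,\gamma_2\colon[0,\varepsilon)\to X$ meeting only at $0$ such that $\norm{\gamma_1(t)-\gamma_2(t)}$ decays strictly faster than $t$ while $d_i(\gamma_1(t),\gamma_2(t)) = \Theta(t)$. The arcs are obtained by lifting two suitable smooth pieces of $C_0X$ at $p$ to $X$: the corresponding lines through $0$ in $\C^3$ either coincide (common tangent) or collide to higher order (cuspidal branch), so the lifts are outer-close while remaining intrinsically well separated inside $X$. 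This contradicts the LNE inequality $d_i\leq Kd_o$.

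For the ``if'' direction, assume every singularity of $C_0X$ is ordinary. I would cover a small neighborhood of $0$ in $X$ by finitely many pieces: a generic piece $X_{\mathrm{reg}}$ lying above a tubular neighborhood of the smooth part of $C_0X$, together with one piece $X_p$ above each singular point $p$. Over the smooth part, $X$ is outer bilipschitz to the straight cone on a smooth projective curve, which is LNE as observed in the introduction. Above an ordinary singular point with $r$ branches of pairwise distinct tangent directions in $\mathbb{P}^2$, the corresponding sheets of $X$ meet along the exceptional line over $p$ with pairwise transverse tangent planes and a uniform angular gap, so $X_p$ is outer bilipschitz to a disjoint union of cone-like pieces glued only at $0$ and is therefore LNE. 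A gluing argument in the spirit of \cite{BM1} and \cite{NPP2} then assembles these local LNE statements into LNE of $(X,0)$.

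The principal obstacle I expect is the local analysis above an ordinary singular point of $C_0X$. While the heuristic picture of ``$r$ transverse cones meeting at $0$'' is compelling, one must rigorously verify, using the precise equation $f_d+f_{d+1}=0$ and the superisolation hypothesis, that the perturbation introduced by $f_{d+1}$ never pulls two of these sheets closer than a linear factor of the distance to the origin. A secondary difficulty is the gluing step itself, since LNE is not preserved under arbitrary gluing along common strata: one must exhibit compatible pancake decompositions of $X_{\mathrm{reg}}$ and of each $X_p$ with uniform angular separation on overlaps, which again relies crucially on the ordinary hypothesis.
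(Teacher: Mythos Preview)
Your ``only if'' direction is morally correct but the claim $d_i(\gamma_1(t),\gamma_2(t)) = \Theta(t)$ is asserted without proof, and it is the nontrivial part: you must know something about the inner geometry of $(X,0)$, not just of its tangent cone, to rule out that the two arcs are also inner-close. The paper sidesteps this by invoking the theorem of Fernandes--Sampaio \cite{FS}: if $(X,0)$ is LNE then its tangent cone is LNE, and a complex curve germ is LNE iff all its branches are smooth and pairwise transverse. This is both shorter and avoids the inner-distance computation you would otherwise have to supply.

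Your ``if'' direction has a genuine gap, and it stems from an incorrect local model. You describe the piece $X_p$ above an ordinary $k$-fold point $p$ of $C_0X$ as ``$k$ sheets with pairwise transverse tangent planes'' or a ``disjoint union of cone-like pieces glued only at $0$''. But $(X,0)$ is a normal (in particular locally irreducible) hypersurface: there are no sheets. After one blow-up $e\colon X^*\to X$ the total space $X^*$ is \emph{smooth} near $p$ and the exceptional divisor $C_0X\subset X^*$ is a plane curve with an ordinary $k$-fold point there; in local coordinates $(v,w)$ the map $e$ sends $(v,w)$ to $\bigl(\widetilde f(v,w),\,v\widetilde f(v,w),\,w\widetilde f(v,w)\bigr)$ with $\widetilde f$ a product of $k$ transverse linear forms. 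The piece $X_p$ is therefore the image of a smooth disc under a map that collapses $k$ lines, not a union of cones, and the LNE question becomes a delicate comparison of inner and outer contact exponents for pairs of curves passing near different branches of $\{\widetilde f=0\}$. Your acknowledged ``principal obstacle'' is thus not a technicality to be cleaned up but the entire content of the implication.

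The paper takes a completely different route: it applies the Test Curve Criterion of \cite{NPP1}, which reduces LNE to checking two numerical conditions (equality of multiplicities, and $q_{inn}=q_{out}$) for principal components of $\ell^{-1}(\gamma)$ as $\gamma$ ranges over finitely many types of test curves in $\C^2$. The substantial work, carried out in Proposition~\ref{prop:Delta}, is to show that for an ordinary $k$-fold point the local polar curve $\Pi_{\mathcal D,i}$ consists of $k-1$ smooth transverse branches resolved by one blow-up, and that its image under a generic projection is a packet of $(k{+}1,k)$-cusps with pairwise contact $\frac{k+1}{k}$. This pins down the trees $T$ and $G_0$ explicitly and allows both conditions to be verified by direct computation of multiplicities and contact exponents. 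No gluing or pancake decomposition is used.
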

  
The proof of Theorem \ref{th:main} will use the Test Curve Criterion \cite[Theorem 3.8]{NPP1} which is a characterization of LNE for normal complex surface singularities. We recall its statement in Section \ref{sec:characterization LNE}.

Notice that  superisolated singularities were already used   in the context of Lipschitz geometry of singularities to provide examples of surface singularities in $(\C^3,0)$ having same outer Lipschitz geometry but different embedded topological type (see~\cite{NP2}).

\vskip0.1cm\noindent{\bf Acknowledgments.}  
Anne Pichon was supported by the ANR project LISA  17-CE40--0023-01. Filip Misev was supported by the Max Planck Institute for Mathematics in Bonn.

\section{Examples}
In view of Theorem~\ref{th:main}, it becomes straightforward to explicitly construct super\-isolated surface singularities in $\C^3$ that are LNE. It suffices to consider hypersurfaces of the form $f_d+f_{d+1}=0$,  where $f_d$ is carefully chosen to have only ordinary singularities. We illustrate this below with  two infinite families of LNE superisolated singularities and one example of a superisolated singularity which fails to be LNE. We choose here $f_{d+1}$ to be the power of a linear form, which allows us to check easily that the corresponding singularities are superisolated.

\begin{example}[$C_0X$ smooth curve in $\mathbb{P}^2$]
Let $f_d\in\C[x,y,z]$ be a homogeneous polynomial of degree $d$ such that the projective curve $\{ f_d=0 \}\subset\mathbb{P}^2$ is smooth. Let $l=ax+by+cz$ be any linear form. Then the singularity $(X,0)\subset (\C^3,0)$ defined by the equation
$$ f_d + l^{d+1} = 0 $$
is LNE. Indeed, since the projectivized tangent cone $C_0X=\{f_d=0\}$ does not have any singularities, the condition of Theorem~\ref{th:main} on the singularities of $C_0X$ is vacuously satisfied.
\end{example}

\begin{example}[$C_0X$ generic intersection of smooth curves]
Let $f_{d_1},\ldots,f_{d_r}\in\C[x,y,z]$ be homogeneous polynomials of respective degrees $d_1,\ldots, d_r$ such that the $r$ associated projective curves are smooth, pairwise transverse and without triple intersection points. Let $d=d_1+\ldots+d_r$. Take  any linear form $l$ whose zero locus avoids the singular points of  the curve $\{f_{d_1}\cdot\ldots\cdot f_{d_r} =0\}$. Then the superisolated singularity $(X,0)$ defined by the equation
$$ f_{d_1}\cdot\ldots\cdot f_{d_r} + l^{d+1}=0 $$
is LNE, since $C_0X=\{ f_{d_1}\cdot\ldots\cdot f_{d_r} = 0 \}$ has only ordinary singularities.
\end{example}

\begin{example}[$C_0X$ cuspidal curve]
For every generic    linear form $l$, the hypersurface $(X,0)\subset(\C^3,0)$ defined by the equation
$$zx^2+y^3 + l^4 = 0$$
is superisolated, but not LNE: the projectivized tangent cone $C_0X\subset\mathbb{P}^2$ has a singularity at $[0:0:1]$ which is a cusp, hence not ordinary.
\end{example}

\section{The Test Curve Criterion for LNE of a surface singularity}\label{sec:characterization LNE}

Before stating the Test Curve Criterion  \cite[Theorem 3.8]{NPP1}, we need to introduce some material.

\subsection{Generic projections}

Let $\cal D$ be a $(n-2)$-plane in $\C^n$ and let $\ell_{\cal D} \colon \C^n \to \C^2$ be the linear projection with kernel $\cal D$. Suppose $(C,0)\subset (\C^n,0)$ is a complex curve germ.  There exists an open dense subset $\Omega_C \subset \grassmann(n-2,\C^n)$  such that for $\cal D \in \Omega_C$, $\cal D$ contains no limit of bisecant lines to the curve $C$ (\cite{teissier}). The  projection $\ell_{\cal D}$ is  said  to be \emph{generic for $C$} if  $\cal D \in \Omega_C$.

Let now $(X,0)\subset (\C^n,0)$ be a normal surface singularity. We restrict ourselves to those $\cal D$ in the Grassmannian $\grassmann(n-2,\C^n)$ such that the restriction $\ell_{\cal D}{\mid_{(X,0)}}\colon(X,0)\to(\C^2,0)$ is finite.
The \emph{polar curve} $\Pi_{\cal D}$ of $(X,0)$ for the direction $\cal D$ is the closure in $(X,0)$ of the singular locus of the restriction of $\ell_{\cal D} $ to $X \setminus \{0\}$. The \emph{discriminant curve} $\Delta_{\cal D} \subset (\C^2,0)$ is the image $\ell_{\cal D}(\Pi_{\cal D})$ of the polar curve $\Pi_{\cal D}$.

\begin{proposition}[{\cite[Lemme-cl\'e V 1.2.2]{teissier}}]\label{prop:generic} An open dense subset $\Omega \subset \grassmann(n-2,\C^n)$ exists such that: 
\begin{enumerate}
\item \label{cond:generic1} the family of  curve germs  $(\Pi_{\cal D})_{\cal D \in \Omega}$
is equisingular in terms of strong simultaneous resolution;
\item \label{cond:generic2} the discriminant curves  $ \Delta_{\cal D}=\ell_{\cal D}(\Pi_{\cal D})$, ${\cal D \in \Omega}$, form an equisingular family of reduced plane curves;
\item \label{cond:generic3}  for each $\cal D$, the projection $\ell_{\cal D}$ is generic for its polar curve $\Pi_{\cal D}$. 
\end{enumerate}
\end{proposition}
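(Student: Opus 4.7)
My plan is to realize the polar curves and their discriminants as algebraic families over $\grassmann(n-2,\C^n)$, and then extract open dense subsets on which conditions (1)--(3) hold in turn. Let $W\subset \grassmann(n-2,\C^n)$ be the open set of $\cal D$ for which $\ell_{\cal D}\vert_X$ is finite, and form the universal polar curve
$$\widetilde\Pi=\overline{\{(\cal D,x)\in W\times (X\setminus\{0\}) : \ell_{\cal D}\vert_X \text{ is singular at } x\}},$$
whose fiber over $\cal D$ is $\Pi_{\cal D}$. By generic flatness there is an open dense $U_1\subset W$ over which $\widetilde\Pi\to U_1$ is flat and equidimensional. For (1), I would appeal to the Teissier--L\^e criterion that, for a flat family of reduced plane curve germs, strong simultaneous resolution is equivalent to constancy of the multiplicity sequence (equivalently, of the $\mu^*$-sequence) along every branch. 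Upper semicontinuity of these numerical invariants yields an open dense $U_2\subset U_1$ on which they are constant, so that $(\Pi_{\cal D})_{\cal D\in U_2}$ admits a strong simultaneous resolution.

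For (2), I would form the universal discriminant $\widetilde\Delta$ as the image of $\widetilde\Pi$ in $U_2\times\C^2$ under the universal projection. The numerical invariants of the branches of $\Delta_{\cal D}$ are controlled by those of the branches of $\Pi_{\cal D}$ together with their orders of contact with the kernel directions of $\ell_{\cal D}$; constancy of these invariants cuts out a further open dense $U_3\subset U_2$ on which the $\Delta_{\cal D}$ are reduced and form an equisingular family of plane curves.

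Condition (3) is the subtlest because of its self-referential nature: $\cal D$ must avoid the cone of limits of bisecants of $\Pi_{\cal D}$ at $0$, but that cone itself depends on $\cal D$. On $U_3$ the equisingularity from (1) forces the tangent cone $C_0\Pi_{\cal D}$, and more generally the aur\'eole of limit secants of $\Pi_{\cal D}$ at $0$, to vary algebraically in $\cal D$, so the locus
$$I=\{\cal D\in U_3 : \cal D\text{ meets the limit bisecant cone of }\Pi_{\cal D}\text{ nontrivially}\}$$
is a closed constructible subset of $U_3$. I expect the main obstacle to be showing $I$ is proper in $U_3$: I would argue by a dimension count, since for each fixed $\cal D$ the limit bisecant cone is a finite union of complex lines through $0$, whereas $\grassmann(n-2,\C^n)$ has dimension $2(n-2)$, and any finite union of lines is avoided by a generic $(n-2)$-plane; a standard openness/Baire argument propagates this conclusion through the equisingular family. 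The desired subset is then $\Omega = U_3\setminus I$.
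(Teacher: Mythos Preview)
The paper does not give its own proof of this proposition: it is quoted from Teissier \cite[Lemme-cl\'e V~1.2.2]{teissier} and used as a black box, with no argument supplied. There is therefore no in-paper proof to compare your sketch against.

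On the sketch itself: the global architecture (universal family, semicontinuity of invariants, incidence correspondence for the bisecant condition) is the standard one and is sound in outline, but one step is miscalibrated. In part~(1) you appeal to the Teissier--L\^e criterion ``for a flat family of reduced \emph{plane} curve germs,'' whereas the polar curves $\Pi_{\cal D}$ are space curve germs sitting on the singular surface $(X,0)\subset(\C^n,0)$; the $\mu^*$-sequence is a plane-curve invariant and does not directly control strong simultaneous resolution of space curves. You would need a different package of invariants (multiplicity and $\delta$ of each branch together with their pairwise contacts, say), or else first pass to a resolution of $X$ so that the polars live on a smooth ambient surface before invoking plane-curve criteria. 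For part~(3), your instinct about constructibility is right, but the closing mechanism is not a Baire argument: once the incidence locus $I$ is constructible over $U_3$, properness of its image follows from exhibiting a single $\cal D_0\in U_3\setminus I$ (which your dimension count for a fixed $\cal D$ essentially provides), and then Chevalley's theorem gives that the complement is open and dense. These are genuine refinements to make, though not fatal to the strategy.
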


\begin{definition}  \label{def:generic linear projection} 
The projection $\ell_{\cal D} \colon \C^n \to \C^2$ is \emph{generic} for $(X,0)$ if $\cal D \in \Omega$.
\end{definition}

\subsection{Test curves}

Let  $\ell \colon (X,0) \to (\C^2,0)$ be a generic projection, let $\Pi $ be its polar curve and let $\Delta = \ell(\Pi)$ be its discriminant curve. Denote by $\rho'_{\ell} \colon Y_{\ell} \to \C^2$ the minimal composition of blow-ups of points starting with the blow-up of the origin which resolves the base points of the family of projections of generic polar curves  $(\ell(\Pi_{\cal D}))_{\cal D \in \Omega}$.

\begin{definition}  \label{def:Delta-curve}  
A \emph{$\Delta$-curve}  is  an irreducible component of the exceptional curve {$(\rho'_{\ell})^{-1}(0)$} intersecting the strict transform of $\Delta$.  
\end{definition}
Let us blow-up all the intersection points between two $\Delta$-curves. We denote by $\sigma \colon Z_{\ell} \to Y_{\ell}$ and $\rho_{\ell}=\rho'_\ell\circ \sigma\colon Z_\ell\to\C^2$ the resulting morphisms (if no $\Delta$-curves intersect, $\rho_ {\ell} =  {\rho'_{\ell}}$).   The resolution graph of $\rho_{\ell}$ does not depend on $\ell$. We denote it by  $T$ for the rest of the paper.  
\begin{definition}  \label{def:Delta-node}  
A  {\it $\Delta$-node} of $T$ is a vertex $(j)$ of $T$ which represents a $\Delta$-curve. 
\end{definition}
   
Let $E \subset Y$ be a complex curve in a complex surface $Y$ and let $E_1,\ldots,E_n$ be the irreducible components of $E$. We say {\it curvette} of $E_i$ for any smooth curve germ $(\beta,p)$ in $Y$, where $p$ is a point of $E_i$ which is a smooth point of $Y$ and $E$ {and} such that  $\beta$ and $E_i$ intersect transversely. 

If $G$ is a graph, we will denote by $V(G)$ its set of vertices.

\begin{definition} \label{def:test curve}
Let $T'$ be the subtree of $T$ defined as the union of all the simple paths in $T$ joining the root vertex to $\Delta$-nodes (so the complement $T \setminus T'$ consists of strings of valency $2$ vertices ended by a valency $1$ vertex).

For $(i) \in V(T)$, let $C_i$  be the irreducible component of $\rho_{\ell}^{-1}(0)$ represented by $(i)$, so we have $\rho_{\ell}^{-1}(0) = \bigcup_{(i) \in V(T)} C_i$.
Let $(i) \in V({T})$. We call \emph{test curve at $(i)$} (of~$\ell$)  any complex curve germ $(\gamma,0) \subset (\C^2,0)$ such that 
\begin{enumerate}
\item $(i) \in V(T')$;
\item the strict transform $\gamma^*$   by  $\rho_\ell$ is a curvette of $C_i$  intersecting $C_i$ at a smooth point of $\rho_{\ell}^{-1}(0)$;
\item $\gamma^* \cap \Delta^* = \emptyset$. 
\end{enumerate}
\end{definition}

\subsection{Principal components}

Let us first recall the definition of Nash modification. 

Let $\lambda \colon X\setminus\{0\} \to \grassmann(2,\C^n)$ be the map which sends $x \in X\setminus\{0\}$ to the tangent plane $T_xX$. The closure $N X$ of the graph of $\lambda$ in $X \times \grassmann(2,\C^n)$ is a reduced analytic surface. By definition, the \emph{Nash modification} of $(X,0)$ is the induced morphism $\mathscr N  \colon NX \to X$. A morphism $f \colon Y \to X$ factors through Nash modification if and only if it has no base points for the family of polar curves ({\cite[Section 2]{GS}, \cite[Part~III, Theorem~1.2]{S}).

\begin{definition} \label{def:node G resolution} Let $\pi_0 \colon X_0 \to X$ be the minimal good  resolution of $X$ which factors through both the Nash modification $\mathscr N \colon NX \to X$ of $(X,0)$ and  the blow-up of the maximal ideal  of $(X,0)$ and let $G_0$ be its resolution graph. For each vertex $(v)$ of $G_0$  we denote by $E_v$ the corresponding  irreducible component of $\pi_0^{-1}(0)$. A~vertex $(v)$ of $G_0$ such that $E_v$ is an irreducible component of the blow-up of the maximal ideal (resp.\ an exceptional curve of the Nash transform)  is called an {\it $\cal L$-node} (resp.\ a {\it $\cal P$-node}) of $G_0$.
\end{definition}

\begin{definition} \label{def:G'} Consider the graph $G'_0$ of $G_0$ defined as the union of all simple paths in  $G_0$ connecting  pairs of  vertices among $\cal L$- and $\cal P$-nodes. Let $\ell \colon (X,0) \to (\C^2,0)$ be a generic projection. Let  $\gamma$ be a test curve for $\ell$. A component $\widehat{\gamma}$ of $\ell^{-1}(\gamma)$ is called {\it principal} if its  strict transform by $\pi_0$ either is a curvette of a component $E_v$ with $v \in V(G'_0)$ or intersects $\pi_0^{-1}(0)$ at an intersection between two exceptional curves  $E_v$ and $E_{v'}$ such that both  $v$ and $v'$ are in $V(G'_0)$.
\end{definition}

\subsection{Inner and outer contact exponents}  We now define the outer and inner contacts between two complex curves on a complex surface germ. We use the ``big-Theta" asymptotic notation of Bachmann-Landau:  given two function germs $f,g\colon ([0,\infty),0)\to ([0,\infty),0)$ we say $f$ is \emph{big-Theta} of $g$ and we write   $f(t) = \Theta (g(t))$ if there exist real numbers $\eta>0$ and $K \ge 1$ such that for all $t$ with $f(t)\le\eta$,  $\frac{1}{K }g(t) \leq f(t) \leq K g(t)$.

Let $\mathbb{S}^{2n-1}_{\epsilon} = \{ x \in \C^n \colon \norm{x}_{\C^n} = \epsilon\}$. Let $(\gamma_1,0)$ and $(\gamma_2,0)$ be two germs of complex curves inside $(X,0)$. Let $q_{out}=q _{out}(\gamma_1, \gamma_2)$ and $q_{inn}=q_{inn}(\gamma_1, \gamma_2)$ be the two rational numbers $\geq 1$ defined by 
$$ d_o(\gamma_1 \cap \mathbb{S}^{2n-1}_{\epsilon}, \gamma_2 \cap \mathbb{S}^{2n-1}_{\epsilon}) =  \Theta(\epsilon^{q_{out}}),$$
$$ d_i(\gamma_1 \cap \mathbb{S}^{2n-1}_{\epsilon}, \gamma_2 \cap \mathbb{S}^{2n-1}_{\epsilon}) =  \Theta(\epsilon^{q_{inn}}),$$
where $d_{i}$ means inner distance in $(X,0)$ as before. 

\begin{definition} We call $q _{out}(\gamma_1, \gamma_2)$ (resp.\ $q _{inn}(\gamma_1, \gamma_2)$) the {\it outer  contact exponent} (resp.\ {\it  the inner  contact exponent}) between $\gamma_1$ and $\gamma_2$.
\end{definition}


We are now ready to state the Test Curve Criterion \cite[Theorem~3.8]{NPP1}:

\begin{theorem}[{Test curve criterion for LNE of a complex surface}] \label{thm:characterization LNE} A normal surface germ  $(X,0)$  is LNE if and only if the following conditions are satisfied for {all  generic projections} $\ell \colon (X,0) \to (\C^2,0)$ and all  test curves $(\gamma,0) \subset (\C^2,0)$: 
\begin{itemize}
\item[$(1^*)$] \label{iso} for every  principal component $\widehat{\gamma}$ of $\ell^{-1}(\gamma)$, $\mult(\widehat{\gamma})=\mult({\gamma})$ where $\mult$ means multiplicity at $0$; 
\item[$(2^*)$] \label{vertical} for every pair $(\gamma_1,\gamma_2)$ of distinct principal components of $\ell^{-1}(\gamma)$, $q_{inn}(\gamma_1,\gamma_2) = q_{out}(\gamma_1,\gamma_2)$.
\end{itemize}
\end{theorem}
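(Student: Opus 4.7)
The plan is to treat the two implications of the equivalence separately, reducing the ``if'' direction to a piecewise outer-versus-inner comparison over a carrousel decomposition of $(X,0)$ adapted to the discriminant of $\ell$.

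\textbf{Necessity ($\Rightarrow$).} Assume $(X,0)$ is LNE with constant $K$. Condition $(2^*)$ is immediate: for any two complex curve germs $\gamma_1,\gamma_2\subset X$, the bilipschitz estimate $d_o\le d_i\le K\,d_o$ forces $d_o(\gamma_1\cap\mathbb{S}^{2n-1}_\epsilon,\gamma_2\cap\mathbb{S}^{2n-1}_\epsilon)$ and $d_i(\gamma_1\cap\mathbb{S}^{2n-1}_\epsilon,\gamma_2\cap\mathbb{S}^{2n-1}_\epsilon)$ to have the same big-Theta order, so $q_{inn}=q_{out}$. For $(1^*)$, I would argue by contradiction: if $\mult(\widehat\gamma)>\mult(\gamma)$ for a principal component $\widehat\gamma$, then either $\ell|_{\widehat\gamma}$ has degree $>1$ or $\cal D=\ker\ell$ is a limit of secants of $\widehat\gamma$. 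In both cases one finds pairs $p_\epsilon,q_\epsilon\in\widehat\gamma\cap\mathbb{S}^{2n-1}_\epsilon$ with $\|p_\epsilon-q_\epsilon\|=o(\epsilon)$; the ``principal'' hypothesis (the strict transform of $\widehat\gamma$ sits on a component indexed in $G'_0$, i.e.\ between an $\cal L$-node and a $\cal P$-node of $G_0$) forces the inner distance in $X$ between $p_\epsilon$ and $q_\epsilon$ to remain $\Theta(\epsilon)$, contradicting LNE.

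\textbf{Sufficiency ($\Leftarrow$).} Fix a generic projection $\ell$ and take a carrousel decomposition of $(\C^2,0)$ adapted to the discriminant $\Delta=\ell(\Pi)$; pull it back through $\ell$ to obtain a semi-algebraic decomposition $(X,0)=\bigcup_{v\in V(T)}B_v$. I would then argue:
\begin{enumerate}
\item On each $B_v$ the inner metric is bilipschitz to a concrete model determined by the combinatorics of the resolution of $X$, so the problem reduces to comparing the outer metric on $B_v$ with this model.
\item For $v\in V(T)\setminus V(T')$, the piece $B_v$ lies over a ``tail'' string and stays away from the polar curve; tangent planes to $X$ on $B_v$ remain bounded away from $\cal D$, so $\ell|_{B_v}$ is a bilipschitz embedding and inner-equals-outer on $B_v$ is automatic.
\item For $v\in V(T')$, sweep $B_v$ by the $1$-parameter family of principal components of $\ell^{-1}(\gamma)$ obtained as the test curve $\gamma$ at $(v)$ varies transverse to $C_v$. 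Condition $(1^*)$ controls the outer conical profile of each lifted curve (no hidden tangency to $\cal D$), and condition $(2^*)$ controls the outer separation between pairs of them; together they force the outer metric of $B_v$ to coincide with the inner model up to a uniform bilipschitz constant.
\item A pancake/piecewise gluing argument then assembles the local equivalences on the $B_v$ into a global LNE for $(X,0)$.
\end{enumerate}

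The \textbf{main obstacle} is step~(3): $(1^*)$ and $(2^*)$ are $1$-dimensional statements about curves, while LNE is a $2$-dimensional statement about the surface. Upgrading the curve-wise contact-exponent identities to a uniform bilipschitz equivalence on the whole piece $B_v$ requires a density-and-uniformity result -- that the principal components of lifts of test curves at $(v)$ form a uniform semi-algebraic $1$-parameter family sweeping $B_v$, so their contact exponents encode the ambient outer metric. Establishing this uniformity, and verifying in particular that the \emph{non}-principal components of $\ell^{-1}(\gamma)$ land outside $B_v$ and can be discarded, is exactly what forces the delicate definitions of test curve (Definition~\ref{def:test curve}) and principal component (Definition~\ref{def:G'}), and constitutes the technical core of the criterion.
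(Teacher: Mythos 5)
This statement is not proved in the paper at all: it is Theorem~3.8 of the reference \cite{NPP1}, quoted verbatim so that it can be \emph{applied} to superisolated singularities. There is therefore no proof in this paper to compare yours against; the comparison has to be with the (long, technical) proof in \cite{NPP1}, whose strategy is indeed the one you outline --- carrousel decompositions of $(\C^2,0)$ adapted to the discriminant, lifted through $\ell$ to a geometric decomposition of $(X,0)$, with the inner metric on each piece controlled by a resolution model and the outer metric tested against it.

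As a proof, however, your proposal has genuine gaps. The necessity of $(2^*)$ is indeed immediate from $d_o\le d_i\le K\,d_o$. But the necessity of $(1^*)$ is only gestured at: you assert that when $\mult(\widehat\gamma)>\mult(\gamma)$ one finds $p_\epsilon,q_\epsilon\in\widehat\gamma$ with $\|p_\epsilon-q_\epsilon\|=o(\epsilon)$ while $d_i(p_\epsilon,q_\epsilon)=\Theta(\epsilon)$, and that the second estimate follows from $\widehat\gamma$ being principal. That inner lower bound is exactly the point that needs an argument (it requires knowing the inner rates along $G_0'$, i.e.\ the structure of the minimal resolution factoring through the Nash modification and the blow-up of the maximal ideal), and you do not supply one. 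More seriously, in the sufficiency direction your step~(3) --- upgrading the curve-wise identities $(1^*)$ and $(2^*)$, which concern a discrete set of contact exponents, to a uniform bilipschitz comparison on each two-dimensional piece $B_v$ --- is precisely the content of the theorem; you correctly name it as the main obstacle but leave it unresolved, so the proposal is a plan rather than a proof. Step~(2) (that pieces over $V(T)\setminus V(T')$ are automatically inner-equals-outer) and step~(4) (the gluing) are also asserted without justification; the former is essentially the content of \cite[Proposition~7.2]{NPP1} (restated here as Lemma~\ref{LNE along strings}), which is itself a nontrivial statement about the Gauss map on the Nash modification. In short: right road map, but the technical core is missing, and nothing in the present paper can be used to fill it since the paper simply imports the result.
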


\section{The trees $T$ and $G_0$ of an superisolated singularity with ordinary projectivized tangent cone} \label{sec:T and G0}

In this section, we prove the following proposition which will enable one to describe the graphs $T$ and $G_0$ for a superisolated singularity whose projectivized tangent cone has only ordinary singularities.
 
\begin{proposition} \label{prop:Delta} Let $(X,0) \subset (\C^3,0)$ be a superisolated singularity and let $e \colon X^* \to X$ be the blow-up of the maximal ideal of $X$, so the projectivized tangent cone $C_0X$ is  the exceptional curve $ e^{-1}(0)$. Let $p_i, i=1, \ldots, r $ be the singular points of $C_0X$. 
 
Let $\cal D \in \Omega$, i.e., $\ell_{\cal D}\colon  (X,0) \to (\C^2,0)$ is a generic projection, and let us decompose the polar curve $\Pi_{\cal D}$ as the union
$$\Pi_{\cal D}= \Pi_{\cal D,0} \cup \Pi_{\cal D,1} \cup \ldots \cup \Pi_{\cal D,r},$$
where the strict transform $\Pi_{\cal D,0}^*$ of $\Pi_{\cal D,0}$ by $e$ intersects $C_0X$ at smooth points and where for each $i=1,\ldots,r$, $ \Pi_{\cal D,i}^*$ consists of the components of $\Pi_{\cal D}^*$ passing through~$p_i$. So in particular, for every $i \neq j$, $\Pi_{\cal D,i}$ and $\Pi_{\cal D,j}$ are transversal.

\begin{enumerate}
\item For each $i=1,\ldots, r$, $\Pi_{\cal D,i} \neq \emptyset$  and the basepoints of the family of generic polar curves $\big( \Pi_{\cal D'} \big)_{\cal D' \in \Omega}$ are exactly the points  $p_i$, $i=1,\ldots, r$.
\item Assume that the projectivized tangent cone of $(X,0)$ has only ordinary singularities, i.e., for each $i=1,\ldots, r$, the germ $(C_0X,p_i)$ consists of $k_i$ smooth transversal curves. Then
\begin{enumerate}
\item The germ $(\Pi_{\cal D,i},0)$  has $k_i-1$ components,  its strict transform by $e$ consists of  smooth transversal curves passing through $p_i$ and the basepoint $p_i$ is resolved by just one blow-up.
\item Let $\ell \colon (X,0) \to (\C^2,0)$   be a generic projection and let  $\cal D \in \Omega$. For each $i=1,\ldots, r$, the $k_i-1$ components of the curve $\ell(\Pi_{\cal D,i})$ are all $(k_i+1,k_i)$-cusps and they have pairwise contact $\frac{k_i+1}{k_i}$.   Moreover, for every generic pair $\cal D, \cal D' \in \Omega$, any two components of $\ell(\Pi_{\cal D,i}) \cup \ell(\Pi_{\cal D',i})$ have  contact $\frac{k_i+1}{k_i}$.   
\end{enumerate}
\end{enumerate}
\end{proposition}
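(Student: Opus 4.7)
The plan is to work locally on $X^*$ around each singular point $p_i$ of $C_0X$ and derive every statement from one explicit chart computation. Choose projective coordinates on $\mathbb{P}^2$ so that $p_i = [0 \colon 0 \colon 1]$ and use the blow-up chart $x = zu$, $y = zv$. Writing $\tilde f(u,v) = f_d(u,v,1)$ and $\tilde g(u,v) = f_{d+1}(u,v,1)$, the strict transform $X^*$ has local equation $\tilde f + z\tilde g = 0$. The superisolated hypothesis forces $\tilde g(0,0) \neq 0$, so $\tilde g$ is a unit, $z = -\tilde f/\tilde g$, and $(u,v)$ are smooth local coordinates on $X^*$ near $p_i$; in particular $X^*$ is smooth there and $C_0X$ has local equation $\tilde f(u,v) = 0$, which becomes an ordinary $k_i$-fold germ under the hypothesis of~(2).

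For the projection $\ell_{\cal D}(x,y,z) = (x - \alpha z,\, y - \beta z)$ (kernel spanned by $(\alpha, \beta, 1)$), the composition $\ell_{\cal D} \circ e$ reads $(u,v) \mapsto -h(u,v) \cdot (u-\alpha,\, v-\beta)$ with $h = \tilde f/\tilde g$, and a direct computation factors its Jacobian determinant as
\[
J(u,v) = h(u,v) \cdot P(u,v), \qquad P := (u-\alpha) h_u + (v-\beta) h_v + h.
\]
The factor $h$ cuts out $C_0X$, so $\Pi_{\cal D}^*$ has local equation $\{P = 0\}$ near $p_i$. Part~(1) now follows: $\nabla f_d(p_i) = 0$ makes $h, h_u, h_v$ all vanish at the origin in $(u,v)$, so $P(0,0) = 0$ and $p_i \in \Pi_{\cal D}^*$ for every $\cal D$; conversely, at any smooth point of $C_0X$ one can choose $\cal D$ so that $D_v f_d$ is nonzero there. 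For part~(2a), expand $P$ at $p_i$: writing $\tilde f^{(k_i)} = \prod_{j=1}^{k_i} \ell_j$ for the leading form with $\ell_j = a_j u + b_j v$, the $(k_i-1)$-homogeneous leading part of $P$ is (up to a nonzero scalar)
\[
\sum_{j=1}^{k_i} \mu_j \prod_{m \neq j} \ell_m, \qquad \mu_j := -\alpha a_j - \beta b_j.
\]
For generic $(\alpha, \beta)$ this form splits into $k_i - 1$ distinct linear factors, none coinciding with any $\ell_j$: the form cannot vanish identically on $\{\ell_j = 0\}$ (this would force $\mu_j = 0$), and pairwise distinctness of its factors is the generic non-vanishing of its discriminant in the $\mu_j$. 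This yields the required structure of $\Pi_{\cal D, i}^*$ at $p_i$ and implies that a single blow-up at $p_i$, whose exceptional $\mathbb{P}^1$ parametrizes tangent directions, separates all branches of all generic $\Pi_{\cal D}^*$.

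For part~(2b), fix an irreducible component $\gamma$ of $\Pi_{\cal D, i}$ and parametrize $\gamma^*$ at $p_i$ by $(\phi(t), \psi(t))$ with $\phi(0) = \psi(0) = 0$ and $(\phi'(0), \psi'(0)) = (1, \sigma)$, where $\sigma$ is the tangent slope (distinct from every $C_0X$-slope by (2a)). Pulling back via $e$, the corresponding component in $\C^3$ has parametrization $\gamma(t) = (\phi(t) z(t),\, \psi(t) z(t),\, z(t))$ with $z(t) = -\tilde f(\phi(t), \psi(t))/\tilde g(\phi(t), \psi(t))$. Transversality of $(1, \sigma)$ to every branch of $\{\tilde f = 0\}$ gives $\tilde f(\phi(t), \psi(t)) = \tilde f^{(k_i)}(1, \sigma)\, t^{k_i} + O(t^{k_i+1})$ with nonzero leading coefficient, so $z(t)$ has order exactly $k_i$ and $\mult_0 \gamma = k_i$. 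Introducing $\tau = z(t)^{1/k_i}$ as a local reparametrization, one finds
\[
\ell(\gamma)(\tau) = \bigl(-\alpha \tau^{k_i} + c\, \tau^{k_i+1} + \cdots,\; -\beta \tau^{k_i} + \sigma c\, \tau^{k_i+1} + \cdots \bigr)
\]
with $c = c(\sigma) \neq 0$. The linear change $(a,b) \mapsto (a,\, \alpha b - \beta a)$ converts this into $\bigl(-\alpha \tau^{k_i} + \cdots,\, (\alpha \sigma - \beta) c\, \tau^{k_i+1} + \cdots\bigr)$, a $(k_i, k_i+1)$-cusp. Two components $\gamma, \gamma'$ of $\Pi_{\cal D, i}$ have distinct tangent slopes $\sigma \neq \sigma'$, so the coefficient $B(\sigma) := (\alpha \sigma - \beta) c(\sigma)$ takes different values and $\ell(\gamma), \ell(\gamma')$ agree at order $\tau^{k_i}$ but differ at order $\tau^{k_i+1}$, giving contact $(k_i+1)/k_i$. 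The same argument applied to a second generic projection $\cal D'$ yields $k_i - 1$ more cusps with tangent slopes generically distinct from those of $\cal D$, so the same contact holds for any mixed pair.

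The main technical point is the algebraic fact used in (2a): for the ordinary $k$-fold germ $\prod_{j=1}^k \ell_j$, the polar $\sum_j \mu_j \prod_{m \neq j} \ell_m$ is a product of $k-1$ distinct linear forms, none equal to any $\ell_j$, for generic $(\mu_j)$. A secondary book-keeping task is to collect all the open dense genericity conditions encountered — at each $p_i$, for each projection, and on pairs $\cal D, \cal D'$ — into the single set $\Omega \subset \grassmann(1, \C^3)$ (and an open dense subset of $\Omega \times \Omega$ for the last assertion of (2b)); since each condition is Zariski-open, their finite intersection is still open and dense.
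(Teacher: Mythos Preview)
Your approach is essentially the paper's: a local chart computation at each $p_i$ that identifies $\Pi_{\cal D}^*$ with the polar of the plane-curve germ $\tilde f$, followed by explicit parametrizations for~(2b). You work in the $z$-chart and factor the Jacobian of $\ell_{\cal D}\circ e$ as $J=hP$; the paper works in the $x$-chart and packages the same identification as its ``relative polar'' lemma, but the content is the same.

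The one substantive gap is precisely the step you flag as ``the main technical point.'' You assert that the leading form $\sum_j \mu_j\prod_{m\neq j}\ell_m$ has $k_i-1$ distinct linear factors by ``generic non-vanishing of its discriminant in the $\mu_j$,'' but the $\mu_j=-\alpha a_j-\beta b_j$ are constrained to a $2$-plane, so one must actually show the discriminant does not vanish identically there --- equivalently, that the partials $F_u,F_v$ of $F=\prod_j\ell_j$ are coprime. The paper supplies exactly this: its Claim~2 is an explicit resultant computation giving $\Res(P,Q)=\eta\prod_{i\neq j}(a_i-a_j)^2\neq 0$, after which it invokes L\^e--Weber on pencils to conclude that one blow-up resolves the family and hence the generic member. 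Your argument for~(2b) has a similar soft spot: the claim that ``$B(\sigma)$ takes different values'' for distinct $\sigma$ is not justified (and your $\tau=z^{1/k_i}$ depends on $\sigma$, so comparing orders in~$\tau$ across branches is not quite a contact computation); the paper's handling of this point is also brief, appealing to equisingularity of the family of projected polars to reduce to one explicit check.
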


As a consequence of Proposition \ref{prop:Delta}, we obtain the following description of the resolution tree $T$ and of the resolution graph $G_0$ introduced in Section \ref{sec:characterization LNE} for all superisolated singularities whose projectivized tangent cones  have only ordinary singularities.
 
Denote by $T_i$ the resolution tree of the $(k_i,k_i+1)$-cusp  decorated with $k_i-1$ arrows corresponding to the components of $\ell(\Pi_{\cal D,i})$ (see Figure~\ref{fig:Ti}). The weight decorating each vertex is the self-intersection of the corresponding exceptional curve. The root vertex, i.e., the vertex corresponding to the first blow-up of $0_{\C^2}$ is the circled one.  The negative number weighting each vertex is the self-intersection of the corresponding exceptional curve. 

\begin{figure}[ht]
\includegraphics{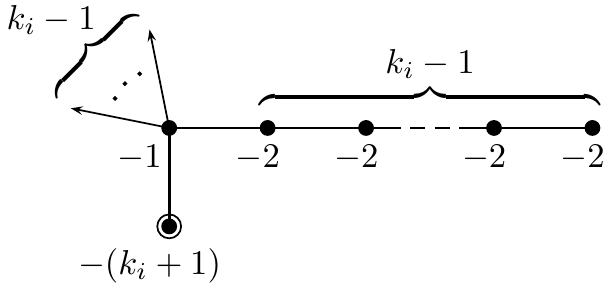}
\caption{The tree $T_i$.}
\label{fig:Ti}
\end{figure}

By Proposition \ref{prop:Delta}, the tree $\cal T$ of the minimal sequence $\rho'_{\ell}$ of blow-ups which resolves the family of projected generic polar curves $\ell(\Pi_{\cal D})$  is a bouquet of $r$ trees $T_i$, $i=1,\ldots,r$ attached by their root vertex (Figure~\ref{fig:T}). Moreover, any $\Delta$-node of~$\cal T$ which is not the root vertex is the vertex of a $T_i$ with valency $k_i+1$ and each such vertex is a $\Delta$-node.

\begin{figure}[ht]
\includegraphics{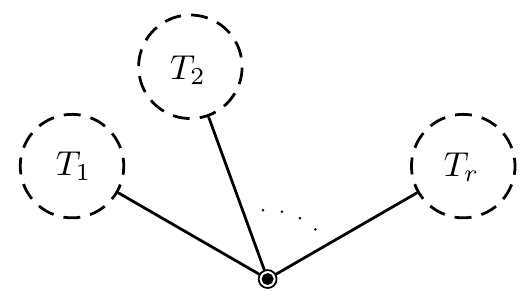}
\caption{The tree $\cal{T}$ is the bouquet of the $r$ trees $T_1,\ldots,T_r$, obtained by identifying their root vertices.}
\label{fig:T}
\end{figure}

By definition, $T$ is obtained from $\cal T$ by blowing-up each edge joining two $\Delta$-nodes, creating separating nodes. In our situation,  there are two cases.
\vskip1em

\noindent {\it Case 1.} Either $C_0X$ is a line arrangement, i.e., each of its components has degree~$1$, then the root vertex is not a $\Delta$-node and  there are no  adjacent  $\Delta$-nodes in~$\cal T$. We then have $\rho_{\ell} = \rho'_{\ell}$, $\Pi_{\cal D,0} = \emptyset$ and  $T=\cal T$. We obtain the  tree $T$  of Figure \ref{fig:TCase1}. 
 
\begin{figure}[ht]
\includegraphics{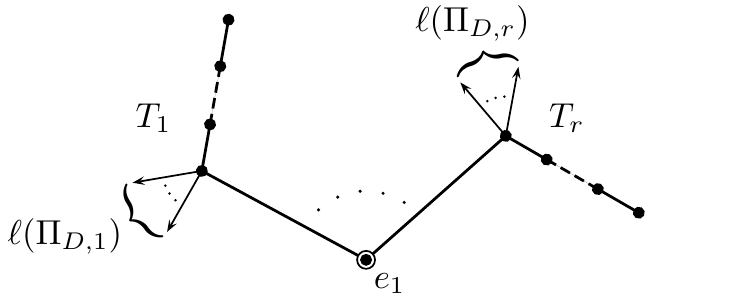}
\caption{The tree $T$ in Case 1. The multiplicity at the root vertex is $e_1=-1-\sum_{i=1}^r k_i$.}
\label{fig:TCase1}
\end{figure}

\noindent {\it Case 2.} Otherwise, i.e., when at least one of the components of $C_0X$ has degree $\geq 2$,  then the root vertex of $\cal T$ is a $\Delta$-node and every adjacent edge has the $\Delta$-node of a $T_i$ at the other extremity. The tree $T$ is then obtained  from $\cal T$ by blowing-up each edge adjacent to the root vertex, creating $r$ new vertices.  Let us denote $T'_i$ the tree resulting from $T_i$ by this blowing-up. The tree $T$ is then a bouquet of trees $T'_i$  attached by their root vertex. We obtain the tree $T$ of Figure~\ref{fig:TCase2}.

\begin{figure}[ht]
\includegraphics{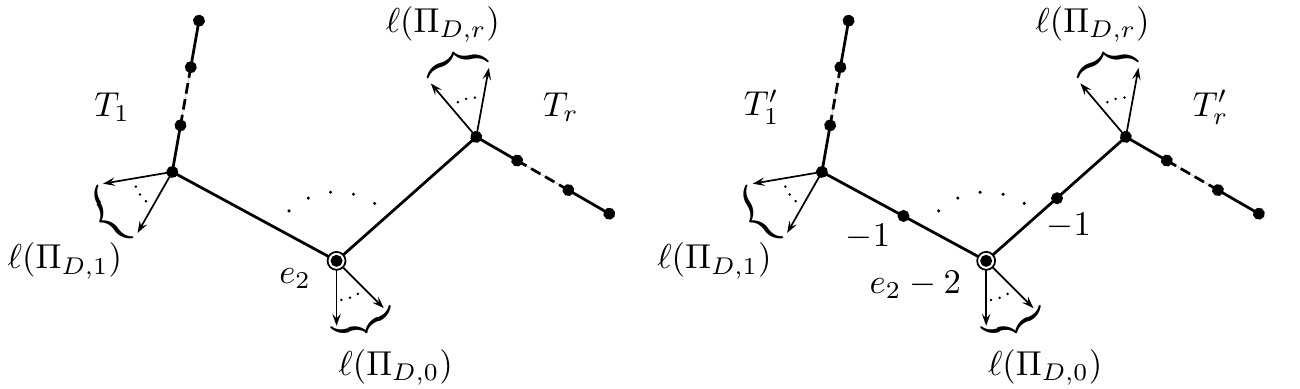}
\caption{The trees $\cal T$ (left) and $T$ (right) in Case 2. Here, $e_2=e_1-r$.}
\label{fig:TCase2}
\end{figure}

The exceptional divisor of $\pi_0^{-1}(0)$ is obtained by blowing-up each singular point of $C_0X$ and  the created $\mathbb{P}^1$ are the $\cal P$-curves of  $\pi_0^{-1}(0)$, i.e., the curves corresponding to the components of the Nash transform of $(X,0)$ (or equivalently, those intersecting the strict transform of the polar curve $\Pi$ of a generic projection). This describes~$G_0$.  

\begin{remark}
In the case of a superisolated singularity, we   have $G'_0 = G_0$ since every vertex of $G_0$ is  either an $\cal L$-node or a $\cal P$-node.
\end{remark}

\begin{example} \label{ex:1} Consider the superisolated singularity $(X,0)$ with equation
$$ xy(x+y)(x-y) + z^5 = 0. $$
Its projectivized tangent cone $C_0X=\{ xy(x+y)(x-y)=0 \}\subset\mathbb{P}^2$ consists of four lines which intersect transversely at the point $[0:0:1]$. In particular, the unique singularity of $C_0X$ is ordinary.  The circled vertices in $G_0$ represent components of $C_0X$.  The arrows in $T$ represent the components of  the discriminant curve $\Delta$ and  arrows in $G_0$ represent the components of  the polar curve $\Pi$. We do not write all intersections numbers (Euler classes) since we do not use them in the paper, but they can be easily computed.

\begin{figure}[ht]
\includegraphics{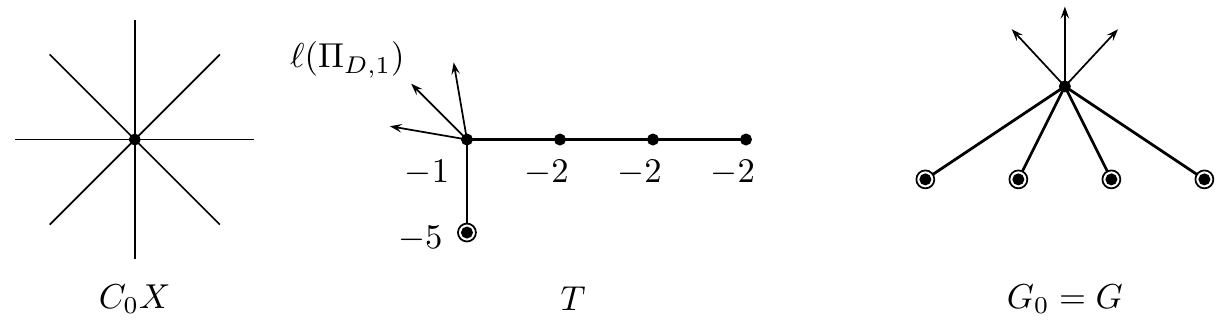}
\caption{Example~\ref{ex:1}: Lines intersecting at a single point.}
\label{fig:Ex1}
\end{figure}
\end{example}

\begin{example} \label{ex:2}
Let $(X,0)$ be given by the equation
$$ xy(x^2+y^2+z^2) + z^5 = 0. $$
Here, the root vertex of $T$ and all adjacent vertices are $\Delta$-nodes, so we have to blow-up the intersection points between the corresponding $\Delta$-curves, creating four vertices which are weighted by the Euler class $-1$ on the tree below. 
\begin{figure}[ht]
\includegraphics{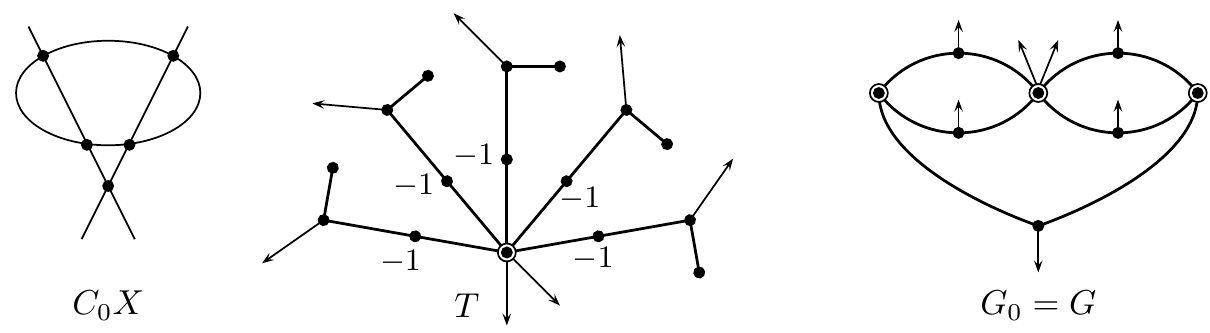}
\caption{Example~\ref{ex:2}: A pair of lines intersecting an oval.}
\label{fig:Ex2}
\end{figure}
\end{example}

\begin{example} \label{ex:3}
Consider the singularity $(X,0)\subset (\C^3,0)$, for which $C_0X$ consists of two transverse ovals: 
$$ (x^2+2y^2)(2x^2+y^2) + z^5 = 0.$$ The graphs $T$ and $G_0$ are represented on Figure \ref{fig:Ex3}.

\begin{figure}[ht]
\includegraphics{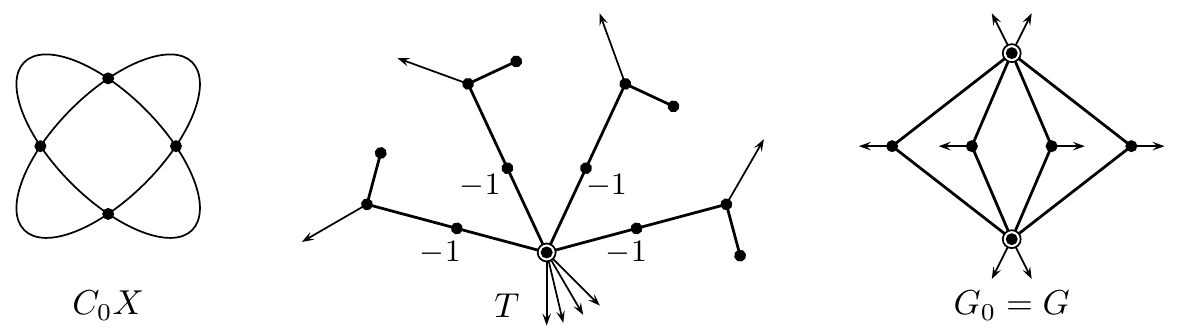}
\caption{Example~\ref{ex:3}: Two ovals with transversal intersections.}
\label{fig:Ex3}
\end{figure}
\end{example}

\begin{example} \label{ex:4} Consider the singularity $(X,0)\subset (\C^3,0)$, for which $C_0X$ consists of four pairwise transversal lines: 
$$ xy(x+y+z)(x-y-z) + z^5 = 0.$$
The graphs $T$ and $G_0$ are represented on Figure \ref{fig:Ex4}.

\begin{figure}[ht]
\includegraphics{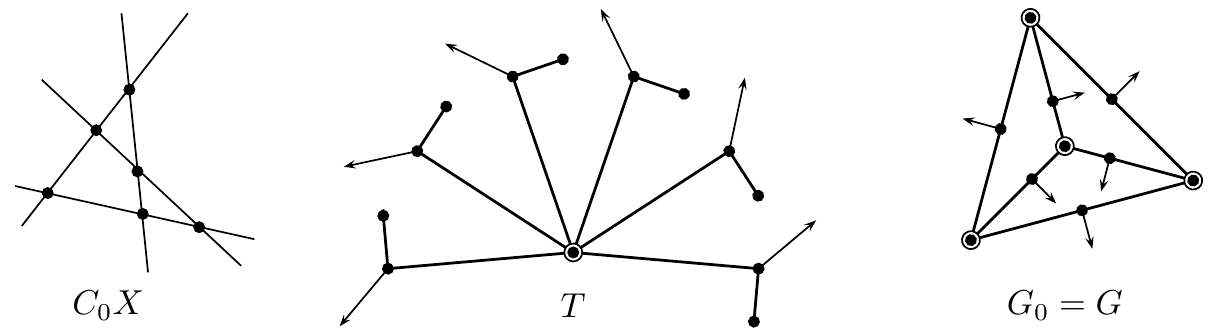}
\caption{Example~\ref{ex:4}: Generic line arrangement.}
\label{fig:Ex4}
\end{figure}
\end{example}

\begin{proof}[Proof of Proposition \ref{prop:Delta}]
Assume that $(X,0)$ has equation $f_d+f_{d+1}=0$. We set $f=f_d$ and $g=-f_{d+1}$. Let $\widetilde{e} \colon W \to \C^3$ be the blow-up of the origin of $\C^3$, so  $X^*$ is the strict transform of $X$ by $\widetilde{e}$  and $e \colon X^* \to X$ is the restriction of $\widetilde{e}$ to $X^*$.

We will work in one of the three standard affine coordinate charts of $W$. Let us fix notations. By definition, $W$ is the closure in $\C^3 \times \mathbb{P}^2$ of the set $$\{ ((x,y,z),L) \in (\C^3\setminus \{0\}) \times \mathbb{P}^2 \colon [x:y:z] = L \},$$ and $\widetilde{e}$ is the  restriction to $W$ of the projection  $\C^3 \times \mathbb{P}^2 \to \C^3$ on the first factor. Consider the affine coordinate chart of $\mathbb{P}^2$ defined by ${U_1 = \{[1:v:w] \in \mathbb{P}^2 \colon (v,w) \in \C^2\}}$ and set  $W_1 = W \cap (\C^3 \times U_1)$. We can choose affine coordinates $(x,v,w)$ in $W_1$ so that  in this chart, $\widetilde{e}(x,v,w) = (x,y,z) = (x,xv,xw)$ and $\widetilde{e}^{-1}(0) \cong \mathbb{P}^2$ has equation $x=0$.

Let $p$ be a point of $C_0X=e^{-1}(0)$ such that the strict transform $\Pi^*_{\cal D}$ passes through $p$. Without loss of generality, we can  choose our coordinates in $\C^3$ so that $p = (0,0,0)$ in the affine coordinates of $W_1$ (i.e., $p$ corresponds to the $x$-axis in the tangent cone of $X$). Then, in this chart, $X^*$ has equation
$$ f(1,v,w) - x g(1,v,w) = 0, $$
and $e^{-1}(0) = C_0X$ has equations $$ f(1,v,w)=0 \quad \hbox{and} \quad x=0.$$
Let us identify  $\grassmann(1, \C^3)$ with $\mathbb{P}^2$. We  can also assume that $\cal D =  [0:0:1]$,  i.e.,  $\ell_{\cal D} = (x,y)$. Then $\Pi_{\cal D}= X \cap \{f_z - g_z = 0\}$, so the strict transform   $\Pi_{\cal D}^*$ of $\Pi_{\cal D}$ by $e$ has equations:
$$f_z(1,v,w) - x g_z(1,v,w) =0 \quad\text{and}\quad f(1,v,w) -x g(1,v,w) =0. $$ 
Since $\Pi^*_{\cal D}$ passes through $p$, we have $f_z(1,0,0) = 0$. 

Set $\widetilde{f}(v,w)=f(1,v,w)$, so $e^{-1}(0)$ has equation $\widetilde{f}(v,w)=0$. Notice that $f_y(1,v,w) = \widetilde{f}_v(v,w)$ and $f_z(1,v,w) = \widetilde{f}_w(v,w)$.
  
Assume first that $p$ is a smooth point of $e^{-1}(0)$, i.e.,  one of $\widetilde{f}_v(0,0)$ or  $\widetilde{f}_w(0,0)$ is nonzero. Since $\widetilde{f}_w(0,0) = f_z(1,0,0) = 0$, we have $\widetilde{f}_v(0,0) = f_y(1,0,0) \neq 0$. Let us prove that $p$ is not a basepoint of the family of generic polars. Since ${\cal D = [0:0:1] \in \Omega}$, for all pairs of sufficiently small complex numbers $(\alpha,\beta)$, we have $\cal D_{(\alpha, \beta)} = [\alpha: \beta: 1] \in \Omega$. The  polar curve $\Pi_{\cal D_{(\alpha, \beta)}}$ has equation $\alpha (f_x-g_x) + \beta (f_y - g_y) +  (f_z - g_z )= 0$, so its strict transform by $\widetilde{e}$ has equations 
$$ \alpha f_x(1,v,w) +  \beta f_y(1,v,w) + f_z(1,v,w)   \hskip3cm $$ 
$$\hskip3cm  - x \big[ \alpha g_x(1,v,w) +  \beta g_y(1,v,w)  + g_z(1,v,w) \big]=0 $$
and
$$ f(1,v,w) -x g(1,v,w) =0.$$ 
Hence a point $(0,v,w) \in e^{-1}(0)$ lies on the strict transform $\Pi^*_{\cal D_{(\alpha,\beta)}}$ if and only if
$$\alpha f_x(1,v,w) +  \beta f_y(1,v,w) + f_z(1,v,w)  =0.$$ 
At $p=(x,v,w)=(0,0,0)$, we have  $\alpha f_x(1,0,0) +  \beta f_y(1,0,0) + f_z(1,0,0) \neq 0$ for generic values of the pair $(\alpha, \beta)$ since $ f_y(1,0,0) \neq 0$. We conclude that  for generic values of $(\alpha,  \beta)$, the strict transform $\Pi^*_{\cal D_{(\alpha,\beta)}}$  does not pass through $p$. This proves that $p$ is not a basepoint of the family of generic polars $(\Pi_{\cal D'})_{\cal D' \in \Omega}$.

Assume now that  $p$ is a singular point of $e^{-1}(0)$,  so $p=p_i$ for some $i=1,\ldots,r$. We then have $ f_x(1,0,0) = f_y(1,0,0) = f_z(1,0,0) =0$ and the previous argument shows that for all $(\alpha, \beta)$,  $\Pi^*_{\cal D_{(\alpha,\beta)}}$  passes through $p$. Therefore $p$ is a basepoint of the family of generic polars $(\Pi_{\cal D'})_{\cal D' \in \Omega}$. This completes the proof of (1).

Let us now prove (2).   For a moment, we just assume that   $(X,0)$ is a superisolated singularity and we prove a lemma.  We use again the above notations. Let $p$ be a singular point of $e^{-1}(0)$, say $p=p_1$. We restrict ourselves to pairs   $(\alpha,\beta)=(0,t)$ for $t \in \C$ and  we set $\cal D_t = \cal D_{(0,t)}$. Since $\cal D_0  = \cal D
\in \Omega$, then  for $t$ in a small neighbourhood $B$ of $0$ in $\C$, we still have $\cal D_t \in \Omega$, i.e. the projection $\ell_t \colon (X,0) \to (\C^2,0)$ defined by $\ell_t (x,y,z) = (x, y-tz)$ is generic for $(X,0)$.

Recall that $\Pi_{\cal D_t,1}$ is the part of the polar curve $\Pi_{t}$ whose strict transform by $\widetilde{e}$ passes through $p=p_1$, i.e., the germ $(\Pi_{{\cal D_t}, 1},0)$ is the $\widetilde{e}$-image of the germ $(\Pi_{\cal D_t}^*,p)$. We will describe the family $(\Pi_{{\cal D_t}, 1})_{t \in B}$.

We use again the previous  coordinates chart and notations. Let $e_0 \colon Y \to \C^2$ be the blow-up of the origin of $\C^2_{(x,y)}$. We consider $e_0$ in the chart $(x,v) \mapsto (x,y)=(x,xv)$, we set $q=(1,0) \in Y$ in this chart and we denote by $\tilde{\ell}_t \colon (X^*,p) \to (Y,q)$ the projection $(x,v,w) \mapsto (x,v-tw)$. So we have the commutative diagram:
$$
\xymatrix{
 (X^*,p)\ar@{->}[r]^e\ar@{->}[d]^{\tilde \ell_t}&(X,0)\ar@{->}[d]^{\ell_t} \\
 (Y,q)\ar@{->}[r]^{e_0}&  (\C^2,0)\hbox to 0pt{\,.\hss} 
}
$$

The series $g(1,v,w)  \in \C\{v,w\}$  is a unit at $p$ since $\{g=0\} \cap Sing (f=0)=\emptyset$ in $\mathbb{P}^2$ and by change of the local coordinates, we can assume $g(1,v,w)=1$. 
Then  the   equations for $(\Pi_{\cal D_t}^*,p)$ can be written:
$$t \widetilde{f}_v(v,w)  +  \widetilde{f}_w(v,w)  =0 \quad\text{and}\quad \widetilde{f}(v,w) - x =0\,.$$

The first equation is nothing but the  polar curve $\Gamma_{t}$ of the morphism  $(v,w) \mapsto (\widetilde{f}(v,w),v-tw)$. Consider the isomorphism $\proj_t \colon (X^*,p) \to (\C^2,0)$ which is the restriction of the linear projection $(x,v,w)\mapsto (v-tw,w)$. We have proved:

\begin{lemma} \label{lem:relative polar} $(\Pi_{\cal D_t}^*,p)$ is the inverse image by $\proj_t$ of the polar curve $\Gamma_{t}$ of the morphism $\phi_t \colon (\C_{(v,w)}^2,0) \to (\C_{(x,v)}^2,0)$ defined by $(v,w) \mapsto (\widetilde{f}(v,w),v-tw)$, i.e., the relative  polar curve of the map germ $  \widetilde{f}$ for the generic projection $(v,w) \mapsto v-tw$.
\end{lemma}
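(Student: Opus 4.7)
The lemma essentially collates the computations already performed in the paragraphs preceding the statement; my plan is to make the bookkeeping explicit. I proceed in three short steps.

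First, I would check that $X^*$ is smooth at $p$ so that the parametrization and $\proj_t$ are well-defined. After the change of local coordinates normalizing $g(1,v,w)=1$, the local equation of $X^*$ in the chart $W_1$ reads $\widetilde{f}(v,w) - x = 0$, whose $x$-derivative is $-1$. Thus $X^*$ is smooth at $p$ and admits the parametrization $(v,w) \mapsto (\widetilde{f}(v,w), v, w)$. Pulled back through this parametrization, $\proj_t$ becomes the linear automorphism $(v,w) \mapsto (v-tw, w)$ of $\C^2$, which has unit Jacobian; hence $\proj_t \colon (X^*, p) \to (\C^2, 0)$ is indeed an isomorphism of germs.

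Second, I would compare the defining equations of $\Pi^*_{\cal D_t}$ and $\Gamma_t$. The derivation immediately preceding the lemma statement shows that $\Pi^*_{\cal D_t}$ is cut out on $X^*$ (in the $(v,w)$-parametrization) by $t\widetilde{f}_v(v,w) + \widetilde{f}_w(v,w) = 0$. On the other hand, the Jacobian determinant of $\phi_t(v,w) = (\widetilde{f}(v,w), v-tw)$ is
\[
\det \begin{pmatrix} \widetilde{f}_v & \widetilde{f}_w \\ 1 & -t \end{pmatrix} = -(t\widetilde{f}_v + \widetilde{f}_w),
\]
so $\Gamma_t \subset \C^2_{(v,w)}$ is cut out by the same polynomial equation. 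Under the $(v,w)$-parametrization, $\Pi^*_{\cal D_t}$ and $\Gamma_t$ therefore coincide as germs of curves in $\C^2_{(v,w)}$.

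Third, I would rephrase this coincidence in terms of $\proj_t$. The source of $\phi_t$ and the target of $\proj_t$ are both copies of $\C^2$; they are identified via the composition of the $(v,w)$-parametrization with $\proj_t$, which is the map $(v,w) \mapsto (v-tw, w)$. Under this identification, $\proj_t$ becomes the identity, and the set equality from the second step translates to the stated equality $\Pi^*_{\cal D_t} = \proj_t^{-1}(\Gamma_t)$. The final clause of the lemma, that $\Gamma_t$ is the relative polar curve of the map germ $\widetilde{f}$ for the generic projection $(v,w) \mapsto v-tw$, is merely the standard definition of a relative polar curve. I do not anticipate a genuine obstacle: the substantive content is the Jacobian computation of the second step, and everything else is careful bookkeeping of the several copies of $\C^2$ involved.
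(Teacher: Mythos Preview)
Your proposal is correct and follows precisely the paper's approach: the lemma is stated in the paper as a summary of the computations carried out in the paragraphs immediately preceding it, and the paper gives no separate proof. Your three steps make explicit exactly those computations (smoothness of $X^*$ at $p$, the Jacobian determinant identifying the equation of $\Gamma_t$ with that of $\Pi_{\cal D_t}^*$, and the bookkeeping for $\proj_t$), so there is no substantive difference between your argument and the paper's.
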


Set $q=(1,0)$ in $\C^2_{(x,v)}$.  The situation is summarized in the  commutative diagram: 
$$
\xymatrix{
(\C^2,\Gamma_t,0)\ar@{<-}[r]^{\proj_t}\ar@{->}[dr]^{\phi_t} & (X^*,\Pi_{\cal D_t}^*,p)\ar@{->}[r]^e\ar@{->}[d]^{\tilde \ell_t}&(X,\Pi_{\cal D_t},0)\ar@{->}[d]^{\ell_t} \\
& (Y,\phi_t(\Gamma_t),q)\ar@{->}[r]^{e_0}&  (\C^2,\Delta_{\cal D_t}, 0) 
}
$$

Assume now that the projectivized tangent cone of the superisolated singularity $(X,0)$ has only ordinary singularities. Then the germ $(e^{-1}(0),p)$ consists of $k$ smooth branches having $k$ distinct tangent lines at $p$.  Maybe after change of coordinates, we can assume  that none of the tangent lines of $(e^{-1}(0),p)$  is tangent to $v=0$. Hence $\widetilde{f}(v,w) = \prod_{i=1}^k (w+a_iv+h.o.),$ where $h.o.$ means higher order terms and where for all $i$,  $a_i \in \C^*$ and $a_i \neq a_j$ for all $i \neq j$.

Assertion~(2a) of Proposition~\ref{prop:Delta} is now a direct consequence of Lemma \ref{lem:relative polar} and of  the following Claim.
\vskip0,3cm \noindent
{\bf Claim 1.} The polar curve $\Gamma_t$ consists of $k-1$ smooth curves which are pairwise transversal. Moreover, the base point $0$ of the family $(\Gamma_t)_{t \in B}$ is resolved by just blowing-up once the origin.
\begin{proof}[Proof of Claim 1.] The curve $\Gamma_t$  has equation $t \widetilde{f}_v(v,w) + \widetilde{f}_w(v,w) = 0$, so $(\Gamma_t)_{t \in \C}$ is a linear pencil of curves which are the fibers of the meromorphic function  $h = \frac{\widetilde{f}_v}{\widetilde{f}_w}$.
We have
$$\widetilde{f}_v(v,w) =  \sum_{i=1}^k a_i  \prod_{j \neq i}(w+a_jv)  \ \  +\  \  \widetilde{\alpha}(v,w)  $$
and
$$ \widetilde{f}_w(v,w) = \sum_{i=1}^k \prod_{j \neq i}(w+a_jv) \ \ + \   \  \widetilde{\beta}(v,w),$$
where $ \widetilde{\alpha}$ and $ \widetilde{\beta}$ are sums of monomials of degree $\geq k$.

Let $e' \colon Y' \to \C^2_{(v,w)}$ be the blow-up of the origin of $ \C^2_{(v,w)}$ and let $C= {e'}^{-1}(0)$ be its exceptional curve. In order to prove that $(\Gamma_t)_{t \in \C}$ is resolved by $e'$, we have to prove that $h \circ e'$ is well  defined along $C$, i.e., that the strict transforms of $\widetilde{f}_v$ and $\widetilde{f}_w$ intersect $C$ at distinct points (see \cite[Sections 2,3]{LW}).  Consider the two polynomials $ \widehat{\alpha}(v,w_1)= \frac{1}{v^k}  \widetilde{\alpha}(v,vw_1)$ and $ \widehat{\beta}(v,w_1)= \frac{1}{v^k} \widetilde{\beta}(v,vw_1)$. In the chart $e' \colon (v,w_1) \mapsto (v,v w_1)$, the strict transforms of $\widetilde{f}_v=0$ and $\widetilde{f}_w=0$ have equations respectively $P(w_1) + v \widehat{\alpha}(v,w_1)=0$ and $Q(w_1) +v \widehat{\beta}(v,w_1) =0$, where 
$$P(w_1) =  \sum_{i=1}^k a_i  \prod_{j \neq i}(w_1+a_j) \ \hbox{ and } \ \ Q(w_1)= \sum_{i=1}^k   \prod_{j \neq i}(w_1+a_j),$$
so their intersections  with $C$ are the points $(v,w_1) = (0,\lambda)$ where the $\lambda$'s are the roots of $P$ (resp. $Q$).

The end of the proof of Claim 1 uses the following:
\vskip0,3cm\noindent
{\bf Claim 2.} The resultant of the polynomials $P$ and $Q$ has the form
$$\Res(P(X), Q(X)) = \eta \prod_{i \neq j}(a_i-a_j)^2,$$
where $\eta \in \C \setminus\{0\}$.
\vskip0,3cm\noindent
 
Since the $a_i$'s are pairwise distinct, $P$ and $Q$ do not have a common root. This proves that  the strict transforms of  $\widetilde{f}_v$ and $\widetilde{f}_w$ intersect $C$ at distinct points. Since $e'$ is a resolution of the meromorphic function $h$, by \cite[Affirmation~1, p.~361]{LW}, $e'$ is a resolution of each generic $\Gamma_t$. Therefore $\Gamma_t$ consists of $k-1$ smooth curves which are pairwise transversal. This proves Claim~1.
\end{proof}
 
\begin{proof}[Proof of Claim 2.] $R:=\Res(P(X), Q(X))$ is a polynomial in the variables $a_1,\ldots,a_k$ which is symmetric since $P$ and $Q$ are themselves symmetric polynomials in these variables. For $a_i-a_j=0$, the monomial $w_1+a_i$ divides both $P(w_1)$ and $Q(w_1)$, so $-a_i$ is a double common root of $P(w_1)$ and $Q(w_1)$, hence $R=0$. This implies that  $a_i-a_j$ divides the polynomial $R$. Since $R$ is symmetric, actually, $(a_i-a_j)^2$ divides $R$. Therefore, the polynomial $S = \prod_{i \neq j}(a_i-a_j)^2$ has to divide $R$. Since $S$ has degree $k(k-1)$, we just have to prove that $R$ has degree at most  $k(k-1)$. We have
$$P = p_1w_1^{k-1} + p_2 w_1^{k-2} + \ldots + p_{k-1} w_1 + p_k$$
and
$$Q = w_1^{k-1} + q_1 w_1^{k-2}+ \ldots +q_{k-2} w_1 +  q_{k-1},$$
where for each $i \in \{1\ldots,k-1\}$, $p_i$ and $q_i$ are homogeneous symmetric polynomials of degree $i$ and where $p_k = ka_1 a_2 \ldots a_k$ is of degree $k$. The resultant $R$ is the determinant of the following $(k-1) \times(k-1)$ matrix: 

$$ \begin{pmatrix}
     p_1 & 0 & \cdots & 0 &1 & 0 & \cdots & 0\\ 
     p_2 & p_1 & \ddots & \vdots & q_1 & 1 & \ddots&0 \\
     \vdots & p_2 & \ddots & 0 & \vdots & q_1 & \ddots & 0 \\
     p_{k-1} & \vdots & \ddots & p_1 & q_{k-2} & \vdots & \ddots & 1 \\[5pt]
     p_k & p_{k-1} & & p_2 & q_{k-1} & q_{k-2} & & q_1 \\
     0 & p_k & \ddots & \vdots& 0 & q_{k-1} & \ddots & \vdots \\
     \vdots & \ddots & \ddots & p_{k-1} & \vdots & \ddots & \ddots & q_{k-2} \\[5pt]
     0 & \cdots & 0 & p_k & 0 & \cdots & 0 & q_{k-1}
\end{pmatrix} $$

\noindent Writing this matrix as  $\begin{pmatrix} a_{ij} \end{pmatrix}$, its determinant $R$ equals  
$$R = \sum_{\sigma \in \cal S_{2k-2}} \hbox{sign}(\sigma)\prod_{i=1}^{2k-2} a_{i \sigma(i)},$$
where $\cal S_{2k-2}$ is the group of permutations on a set of $2k-2$ elements. When $\sigma$ is the identity permutation, the corresponding summand is the homogenous polynomial $(p_1)^{k-1}(q_{k-1})^{k-1}$, whose degree is $k(k-1)$. Now, it is easy to see from the matrix above that performing a transposition of two indices $i$ and $j$ either leads to a zero term or to a homogeneous monomial of the same degree $k(k-1)$. Therefore, $R$ is a sum of  homogeneous monomials of degree $k(k-1)$. This completes the proof of Claim 2. 
\end{proof}

Let us now prove (2b). Since the family of plane curves $(\ell_{\cal D}(\Pi_{\cal D'}))_{\cal D \times \cal D'}$ is equisingular (\cite[Chap.~I, 6.4.2]{teissier}), it suffices to prove it for $\ell = \ell_0 = (x,y)$, $\cal D = \cal D_t$ for any $t$ close to $0$, and for a pair  $(\cal D, \cal D' ) = (\cal D_t, \cal D_{0})$ with $t \neq 0$.

Let $\lambda_{j,t}$, $j=1,\ldots,k-1$, be the $k-1$ roots of  the polynomial $t P + Q$. We denote by $\delta_{j,t}$ the $\widetilde{e}$-image of the component $\delta^*_{j,t}$ of $(\Pi_{\cal D_t}^*,p)$ which has a parametrization of the form $w=\lambda_{j,t} v + h.o.$ Substituting into the equation $\widetilde{f}(v,w) - x =0$, we obtain that $\widetilde{\ell}(\delta^*_{j,t})$ is a smooth curve parametrized by  $x= \mu_{j,t} v^k +h.o.$, where $\mu_{j,t} = \prod_{i=1}^k(\lambda_{j,t}+a_i)$. Therefore  ${\ell}(\delta_{j,t})$ is a $(k+1,k)$-cusp. Moreover, for small~$t$, we have  $\mu_{j,t} \neq \mu_{l,t}$ for every $j \neq l$. This implies that the two cusps ${\ell}(\delta_{j,t})$ and ${\ell}(\delta_{l,t})$ have contact $\frac{k+1}{k}$. So $\ell(\Pi_{{\cal D_t},1})$ is a union of $k-1$ cusps of type $(k+1,k)$ having pairwise contact $\frac{k+1}{k}$.

Moreover, the two sets $\{ \mu_{j,0}\ \colon j = 1,\ldots,k-1\}$ and $\{ \mu_{j,t}\ \colon j = 1,\ldots,k-1\}$ are disjoint for $t \neq 0$, which means that  the components of $ \ell(\Pi_{\cal D_t,1})$ and $\ell(\Pi_{\cal D_{0},1})$ have  pairwise contact $\frac{k+1}{k}$. This completes the proof of (2b). 
\end{proof}

\section{Proof of Theorem \ref{th:main}} 

We first state two lemmas proved in \cite{NPP1} which will enable one to check Conditions $(1^*)$ and $(2^*)$ of Theorem \ref{thm:characterization LNE} in the proof of the ``if" direction of Theorem \ref{th:main}. 

Let $\ell \colon (X,0) \to (\C^2,0)$ be a generic projection, let $\gamma$ be a test curve and let $\widehat{\gamma}$ be a principal component of $\ell^{-1}(\gamma)$. The equality $\mult(\gamma) = \mult(\widehat{\gamma})$ of Condition $(1^*)$ can be checked easily in the resolution as follows.  Let $C$ be the exceptional curve of $\rho_{\ell}^{-1}(0)$ which intersects  the strict transform $\gamma^*$. Let $\pi \colon \widetilde{X} \to X$ be a   resolution of $X$ which is also a good resolution of  $\widehat{\gamma}$, i.e. $\pi^{-1}(0) \cup \widehat{\gamma}^*$ is a normal crossing divisor, and let $E$ be the exceptional component of $\pi^{-1}(0)$ which intersects $\widehat{\gamma}^*$. Let $m_C$ be the multiplicity of a generic linear form $h \colon \C^2 \to \C$  along $C$ and let $m_E$ be that of a generic linear form $k \colon (X,0) \to (\C,0)$ along $(X,0)$.  

\begin{lemma}(\cite[Lemma 11.6]{NPP1}) \label{lem3} Assume that Conditions $(1^*)$ and $(2^*)$  are satisfied for any test curve at the root vertex (so for any generic line through the origin).  Let $\gamma$ be a  test curve at  a component $C$ of $\rho_{\ell}^{-1}$(0) which is not the root vertex.  Then any pair of principal components $\widehat{\gamma}_1,  \widehat{\gamma}_2$ of $\ell^{-1}(\gamma)$ whose strict transforms intersect  the projectivized tangent cone $C_0X=e^{-1}(0)$ at distinct points satisfies Condition~$(2^*)$.
\end{lemma}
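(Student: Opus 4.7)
The goal is to verify Condition $(2^*)$, i.e.\ $q_{inn}(\widehat\gamma_1,\widehat\gamma_2)=q_{out}(\widehat\gamma_1,\widehat\gamma_2)$. Set $p_i=\widehat\gamma_i^*\cap C_0X$, so $p_1\neq p_2$ are the projective tangent directions of $\widehat\gamma_1,\widehat\gamma_2$ at $0$. For $z_i\in\widehat\gamma_i$ with $\|z_i\|=\epsilon$, the normalized directions $z_i/\epsilon$ approach the distinct points $p_1,p_2$, whence $\|z_1-z_2\|=\Theta(\epsilon)$ and $q_{out}(\widehat\gamma_1,\widehat\gamma_2)=1$. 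Since the inequality $d_i\geq d_o$ forces $q_{inn}\leq q_{out}$, the entire task reduces to producing an inner path of length $O(\epsilon)$ joining $\widehat\gamma_1\cap\mathbb S^{2n-1}_\epsilon$ to $\widehat\gamma_2\cap\mathbb S^{2n-1}_\epsilon$.

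The key construction is an intermediate reference via lifts of a generic line. Choose a line $L\subset(\C^2,0)$ through the origin in general position with respect to the discriminant $\Delta$; such an $L$ is a test curve at the root vertex of $T$. By genericity of $L$, the preimage $\ell^{-1}(L)$ has, for each $i=1,2$, a principal component $\widetilde L^{(i)}$ whose strict transform by $e$ meets $C_0X$ at $p_i$. The hypothesis applied to the pair $(\widetilde L^{(1)},\widetilde L^{(2)})$ yields $(2^*)$, and since their tangent directions are the distinct points $p_1,p_2$ the same argument as above gives $q_{out}(\widetilde L^{(1)},\widetilde L^{(2)})=1$, hence
$$d_i(\widetilde L^{(1)}\cap\mathbb S^{2n-1}_\epsilon,\widetilde L^{(2)}\cap\mathbb S^{2n-1}_\epsilon)=O(\epsilon).$$

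It remains to show $d_i(\widehat\gamma_i\cap\mathbb S^{2n-1}_\epsilon,\widetilde L^{(i)}\cap\mathbb S^{2n-1}_\epsilon)=O(\epsilon)$ for $i=1,2$; the triangle inequality then produces the required $O(\epsilon)$ bound between $\widehat\gamma_1$ and $\widehat\gamma_2$ at height $\epsilon$. Both $\widehat\gamma_i^*$ and $\widetilde L^{(i)*}$ are analytic arcs in $X^*$ passing through $p_i$, so in a small neighborhood $U\subset X^*$ of $p_i$ they can be interpolated by a holomorphic $1$-parameter family of arcs. Pushing this family down by $e$, one reads off the length of the interpolating path in $X$ from the local form of $e$ as a blow-up: on a chart where $e$ has the shape $(x,v)\mapsto(x,xv,\ldots)$, the derivative $\partial_v e$ has norm $O(|x|)=O(\epsilon)$ on the level set $\{\|e(x,v)\|=\epsilon\}$, so the interpolating path in $X$ has length $O(\epsilon)$. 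Combined with the previous paragraph, this yields $q_{inn}\geq 1$, and with $q_{inn}\leq q_{out}=1$ we conclude $q_{inn}=q_{out}$, establishing $(2^*)$.

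The main obstacle is the last estimate when $p_i$ is a singular point of $X^*$, which can happen when $p_i$ is a singular point of $C_0X$. In that case one cannot use the blow-up form directly and must instead resolve $p_i$ by further blow-ups, then run the interpolation argument on a smooth model and track how inner distances transform under the successive contractions down to $X$. Under the ordinary-singularity hypothesis of Theorem~\ref{th:main}, Proposition~\ref{prop:Delta}(2a) tells us that just one extra blow-up resolves each $p_i$ into transverse smooth arcs, which makes the interpolation argument go through cleanly; the general case requires more bookkeeping but no new idea.
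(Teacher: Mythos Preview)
The paper does not prove this lemma; it is quoted verbatim from \cite[Lemma~11.6]{NPP1}, so there is no in-paper argument to compare your attempt against.

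Your argument contains a genuine gap in the second paragraph. You claim that for a \emph{generic} line $L$ the preimage $\ell^{-1}(L)$ has components $\widetilde L^{(1)},\widetilde L^{(2)}$ whose strict transforms meet $C_0X$ at the prescribed points $p_1,p_2$. This is false. The strict transforms of the components of $\ell^{-1}(L)$ meet $C_0X$ precisely in the fibre of the induced degree-$d$ cover $C_0X\to\mathbb P^1$ over the direction $[L]\in\mathbb P^1$; for generic $L$ these are $d$ generic points of $C_0X$, and there is no reason any of them should coincide with the fixed points $p_1,p_2$ determined in advance by $\widehat\gamma_1,\widehat\gamma_2$. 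If instead you try to choose $L$ specially so that one lift passes through $p_1$, there is no reason another lift passes through $p_2$ (the images of $p_1$ and $p_2$ in $\mathbb P^1$ are typically distinct), and such a special $L$ may also fail to be a test curve at the root vertex.

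This break is fatal to the triangle-inequality strategy as written: to connect $\widehat\gamma_i$ (tangent direction $p_i$) to an actual lift $\widetilde L^{(j)}$ (tangent direction some $q_j\neq p_i$) you would need an inner path of length $O(\epsilon)$ between curves with \emph{distinct} tangent directions, which is exactly the statement you are proving. A repair is possible: pick a generic $L$, note that each irreducible component of $C_0X$ carries at least one of the points $q_j$, and then run your blow-up interpolation not at a single point but \emph{along} the component of $C_0X$ joining $p_i$ to such a $q_{j_i}$; the estimate $\lVert\partial_v e\rVert=O(\epsilon)$ still gives length $O(\epsilon)$ for this longer path, after which the hypothesis connects $\widetilde L^{(j_1)}$ to $\widetilde L^{(j_2)}$. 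Your closing paragraph worries about singularities of $X^*$, but in the superisolated setting $X^*$ is smooth by definition, so that obstacle does not arise here; the real obstacle is the one above.
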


We now state Proposition~7.2 of \cite{NPP1} (This statement in~\cite{NPP1} is given in terms of real arcs. Here, we state an equivalent statement in terms of complex curves).  First let us introduce some notations.  Let $(X,0) \subset (\C^n,0)$ be a complex surface and let ${\mathscr N } \colon NX \to X$ be the Nash modification of $X$. The {\it Gauss map} $\widetilde{\lambda} \colon NX \to \grassmann(2,\C^n) $ is  the restriction to $NX$ of the projection of $X  \times \grassmann(2,\C^n)$ on the second factor.

If $C$ is an irreducible component of the exceptional divisor of a composition of blow-ups $\rho \colon Y \to \C^2$, we defined the {\it inner rate} $q_C$ of $C$ as the contact  in $\C^2$ between the $\rho$-image of two curvettes of $C$ meeting $C$ at distinct points.

\begin{lemma}(\cite[Proposition 7.2]{NPP1}) \label{LNE along strings} Let
$\rho \colon Y \to \C^2$ be a sequence of blow-ups of points which resolves the base points of the family of projected polar curves $(\ell(\Pi_{\cal D}))_{\cal D\in \Omega}$. Let $E'$ be the union of components of $ \rho^{-1}(0)$ which are not $\Delta$-curves (Definition~\ref{def:test curve}). Let $(\gamma,0) \subset (\C^2,0)$ be  a complex curve such that $\gamma^* \cap E' \neq \emptyset$  and such that $\gamma^*$ intersects $\rho^{-1}(0)$ at a smooth point.
Let $C$ be the component of $ \rho^{-1}(0)$ such that $C \cap \gamma^* \neq \emptyset$ and let $q_C$ be its inner rate.   Let  $\gamma_1$ and $\gamma_2$ be two  components of $\ell^{-1}(\gamma)$ and consider the two points $p_1 = \gamma_1^*\cap {\mathscr N }^{-1}(0)$ and  $p_2 = \gamma_2^*\cap {\mathscr N }^{-1}(0)$ {where $^*$ means strict transform by the Nash modification $\mathscr N$}. Assume that $q_{inn}(\gamma_1,\gamma_2) = q_C$. Then the pair of curves $(\gamma_1,\gamma_2)$ satisfies Condition~$(2^*)$ if and only if $ \widetilde{\lambda}(p_1) \neq \widetilde{\lambda}(p_2)$.
\end{lemma}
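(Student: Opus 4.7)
The plan is to compute the outer distance $\|\gamma_1(t) - \gamma_2(t)\|$ asymptotically and compare its order to the inner rate $q_C$, using the coordinate splitting of $\C^n$ induced by the generic projection $\ell$. Since $\ell$ is generic, its kernel $\cal D$ is transverse to every limit tangent plane of $X$, so in the splitting $\C^n = \C^2 \oplus \cal D$ (with $\ell$ projection onto the first factor) each $\widetilde{\lambda}(p_i)$ is the graph of a unique linear map $A_i \colon \C^2 \to \cal D$, and the condition $\widetilde{\lambda}(p_1) \neq \widetilde{\lambda}(p_2)$ is equivalent to $A_1 \neq A_2$. Because $C$ is not a $\Delta$-curve, $\gamma^* \cap \Delta^* = \emptyset$, so $\ell$ is an \'etale covering in a punctured neighborhood of $\gamma$: each component $\gamma_j$ of $\ell^{-1}(\gamma)$ is locally the graph of an analytic function $f_j$ with $f_j(\gamma(t)) = A_j \gamma(t) + o(|\gamma(t)|)$ along $\gamma$, reflecting the convergence $T_{\gamma_j(t)}X \to \widetilde{\lambda}(p_j)$. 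As both lifts project to $\gamma(t)$, the displacement lies in $\{0\} \oplus \cal D$:
\begin{equation*}
\gamma_1(t) - \gamma_2(t) = \bigl(0,\; (A_1 - A_2)\gamma(t) + r(t)\bigr), \qquad r(t) = o(|\gamma(t)|),
\end{equation*}
so the outer distance equals $\|f_1(\gamma(t)) - f_2(\gamma(t))\|$.

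For the direction $(\Rightarrow)$ (if Condition $(2^*)$ holds then the Nash lifts differ): suppose instead that $A_1 = A_2$. The linear term cancels and the outer distance reduces to $\|r(t)\|$, which is strictly sub-linear in $|\gamma(t)|$. Using a local normal form for the pair of sheets $f_1, f_2$ near the common Nash point $p_1 = p_2$ --- for instance Hsiang--Pati-type coordinates on $NX$ --- one shows that the cancellation is strict enough to push $q_{out}$ strictly above $q_C = q_{inn}$, so Condition $(2^*)$ fails.

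For the direction $(\Leftarrow)$ (if the Nash lifts differ then Condition $(2^*)$ holds): suppose $A_1 \neq A_2$. The universal inequality $d_o \le d_i$ gives $q_{out} \geq q_{inn} = q_C$; what must be established is the matching upper bound $q_{out} \leq q_C$. One expands $\gamma(t) = v t^m + v_1 t^{m+1} + \ldots$ in Puiseux form and tracks the leading non-vanishing term of $(A_1 - A_2)\gamma(t) + r(t)$; this term has order exactly $\Theta(\epsilon^{q_C})$ with $\epsilon = \|\gamma_j(t)\|$, yielding $q_{out} \leq q_C$, hence $q_{out} = q_C = q_{inn}$ and Condition $(2^*)$ holds.

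The main obstacle is the quantitative matching of orders in the $(\Leftarrow)$ direction. When the leading coefficient $v$ of $\gamma$ lies in $\ker(A_1 - A_2)$ (which can occur for $q_C > 1$), the naive linear contribution vanishes and one must track higher-order terms; the exact equality $q_{out} = q_C$ then arises from a combinatorial coincidence between the chain of vanishings $(A_1 - A_2)v, (A_1 - A_2)v_1, \ldots$ (dictated by the Nash structure at $p_1, p_2$) and the depth of $C$ in the resolution $\rho$ (which determines $q_C$). The technical backbone is the explicit local structure of the Nash modification near its smooth points, combined with the equisingularity of the generic polar family $(\Pi_{\cal D'})_{\cal D' \in \Omega}$ underlying the construction of $\rho$.
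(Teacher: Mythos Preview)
The paper does not give its own proof of this lemma: it is quoted verbatim as \cite[Proposition~7.2]{NPP1} and used as a black box in the proof of Theorem~\ref{th:main}. So there is no ``paper's proof'' to compare against; the comparison has to be against the argument in \cite{NPP1}.

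Regarding your proposal itself, the set-up is sound (splitting $\C^n=\C^2\oplus\cal D$, writing the limit tangent planes as graphs of linear maps $A_j$, expressing the lifts as graphs $f_j$), but the core analytic step is not carried out. The expansion $f_j(\gamma(t))=A_j\gamma(t)+o(|\gamma(t)|)$ is only a first-order statement along $\gamma$; it controls the displacement to order $\epsilon$ and no finer. When $q_C>1$ (which is the case actually used in the paper, where $q_C=\frac{k_i+1}{k_i}$), this expansion is far too coarse to detect the exponent $q_C$: the dominant term $(A_1-A_2)\gamma(t)$, if nonzero, would give $q_{out}=1$, contradicting $q_{out}\ge q_{inn}=q_C$; and if the leading coefficient lies in $\ker(A_1-A_2)$, the next contribution is buried inside the unspecified remainder $r(t)=o(|\gamma(t)|)$. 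Your final paragraph names exactly this obstacle but does not resolve it; invoking ``Hsiang--Pati-type coordinates'' and a ``combinatorial coincidence'' is a placeholder for the actual argument, not the argument itself.

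What is missing is a mechanism linking the \emph{variation} of the Gauss map along a path joining $\gamma_1(t)$ to $\gamma_2(t)$ in $X$ to the outer displacement. Concretely, if $\sigma$ is an inner path of length $\Theta(\epsilon^{q_C})$ between the two lifts, then $\gamma_1(t)-\gamma_2(t)=\int_\sigma\bigl(D f_{\mathrm{sheet}}\bigr)\,d\ell(\sigma)$ in the $\cal D$-direction, and the integrand is governed by the Gauss map along $\sigma$. When $\widetilde\lambda(p_1)\ne\widetilde\lambda(p_2)$ the integrand has a definite nonzero average and the integral is $\Theta$ of the path length; when $\widetilde\lambda(p_1)=\widetilde\lambda(p_2)$ the integrand is $o(1)$ and the integral is $o(\epsilon^{q_C})$. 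That path-integral estimate (or an equivalent formulation via local models on $NX$) is what turns your outline into a proof; the endpoint Taylor expansion cannot substitute for it.
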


\begin{proof}[Proof of Theorem \ref{th:main}] The ``only if" direction of the theorem is a direct consequence of the main result of~\cite{FS} which states that a LNE analytic set has a LNE tangent cone. In the case of a hypersurface singularity in $(\C^3,0)$, having a LNE tangent cone is equivalent to having a projectivized tangent cone which is LNE. But a complex curve singularity is LNE if and only if it consists of pairwise transversal smooth components. Therefore, if a supersisolated singularity is LNE, its projectivized tangent cone has only  ordinary singularities.

Let us prove the ``if" direction of the theorem.

Assume $(X,0)$ is a superisolated singularity with equation  $f_d + f_{d+1}=0$ whose  projectivized tangent cone  $\{f_d=0\} \subset \mathbb{P}^2$ has only  ordinary singularities. We have to prove that for all generic projections $\ell$, every test curve satisfies Conditions~$(1^*)$ and~$(2^*)$ of Theorem~\ref{thm:characterization LNE}.

Let $\ell \colon \C^3 \to \C^2$ be a generic linear projection for  $(X,0)$, let $\Pi$ be its polar curve  and  let $\Delta = \ell(\Pi)$ be its discriminant curve.

First, consider a test curve at the root vertex of $T$, i.e., $\gamma$ is a generic line in $(\C^2,0)$. Since the projectivized tangent cone $C_0(X) = \{f_d=0\} \subset \mathbb{P}^2$ is reduced,  the inverse image $\ell^{-1}(\gamma)$ consists of $d$ smooth curves  which meet pairwise  transversally. Therefore, $\gamma$ satisfies Conditions~$(1^*)$ and~$(2^*)$.

Let us now consider a  test curve  $\gamma$ at a component $C$ of $\rho_{\ell}^{-1}$(0) which is not the root vertex. We first prove Condition~$(1^*)$.

We will use the notations of Proposition~\ref{prop:Delta} and the description of the trees~$T$ and~$G_0$ given just after its statement. Recall that $\pi_0 \colon X_0 \to X$ denotes the minimal resolution of $(X,0)$ which factorizes through the Nash modification and through the blow-up of the maximal ideal. By Proposition~\ref{prop:Delta}, $\pi_0$ is obtained by first considering the blow-up $e \colon X^* \to X$ of the maximal ideal  of $(X,0)$ and then blowing-up each singular point $p_i$, $i=1,\ldots,r$ of the projectivized tangent cone $C_0X$ on the smooth surface $X^*$.

Assume first that $\gamma$ is a test curve at the $\Delta$-node $v_i$ of the subtree $T_i$. Call~$C_i$ the  component of $\rho_{\ell}^{-1}(0)$ corresponding to $v_i$. Then the multiplicity $m_{C_i}$ of a generic linear form $h \colon (\C^2,0) \to (\C,0)$ along $C_i$ equals $k_i$. On the other hand, the principal components of $\ell^{-1}(\gamma)$ are $k_i$ curvettes of the  component  $E_i$ of $\pi_0^{-1}(0)$ obtained by blowing-up the point $p_i$ on the smooth surface $X^*$.  Since $E_i \subset X_0$ is the exceptional curve of the blowing-up of a point on a smooth surface, its  self-intersection $E_i^2$ in the surface $X_0$ equals~$-1$. Let us compute the multiplicity $m_{E_i}$ of  a generic linear form $H$ on $(X,0)$ along $E_i$. The multiplicity of~$H$ equals~$1$ along each component of $C_0X$. If~$(H)$ denotes the total transform of $H$ by $\pi_0$, we have $(H).E=0$ for each component $E$ of $\pi_0^{-1}(O)$ (\cite[Lemma~2.6]{L}). Since $(H) = \sum_{j=1}^r  m_{E_j} E_j + C_0X$, we obtain for $E=E_i$: $m_{E_i} E_i^2 + E_i.C_0X=0$. This gives $m_{E_i} = k_i$, since ${E_i.C_0X=k_i}$. Therefore $m_{C_i} =m_{E_i}$. Hence we have proved that each of the $k_i$ principal components of  $\rho_{\ell}^{-1}(0)$ satisfies Condition~$(1^*)$.

Assume now that the root vertex of $T$ is a $\Delta$-node and let $\gamma$ be a test curve at  the separation node obtained by blowing-up the intersection point $q_i$ of the exceptional curve $e_0^{-1}(0)$ with  $C_i$. Let $C'_i$ be the curve created by blowing-up $q_i$. Then the multiplicity of $h$ along $C'_i$ equals $k_i+1$. let $\widehat{\gamma}$ be one of the principal connected components of $\ell^{-1}(\gamma)$. Then  $q' = \widehat{\gamma}^* \cap \pi_0^{-1}(0)$ is the intersection point between $E_i$ and $C_0X$, and we obtain a resolution of $\widehat{\gamma}$ by blowing-up the point  $q'$. Let $E'_i$ be the exceptional curve of this blowing-up. Then the multiplicity $m_{E'_i}$ of $H$ along $E'_i$ equals $m_{E_i} + 1 = k_i+1$. We then have $m_{C'_i} = m_{E'_i}$, i.e., $\widehat{\gamma}$ satisfies Condition $(1^*)$.

Let us now prove Condition $(2^*)$. By Lemma~\ref{lem3}, we will then have to check Condition~$(2^*)$ just for pairs of principal components of $\ell^{-1}(\gamma)$ whose strict transforms by $e$ intersect the same singular point of $C_0X=e^{-1}(0)$.

Assume first  that $\gamma$ is a test curve at the $\Delta$-node $v_i$ of $T_i$, i.e., the strict transform $\gamma^*$ of $\gamma$ by $e_0$ is of the form $x=\mu v^k$ in the local coordinates defined above, with $\mu \in \C$ generic. Any point of $C_0X$ distinct from $p_i$ is on at most one component of the strict transform $\gamma^*$ of $\gamma$ by $e_0$, so we just have to consider components of $\gamma^*$ passing through $p_i$. Let $\widehat{\gamma}_i$ be the union of components of  $\ell^{-1}(\gamma)$ whose strict transforms by $e$ pass through $p_i$.  If $\delta$ is one of them, it has a Puiseux expansion of the form $w = \lambda v + h.o. $  in the local coordinates $(x,v,w)$ centered at $p_i$. Substituting this into the equation $\widetilde{f}(v,w)-x=0$ of $X^*$, we obtain $\prod_{i=1}^k (\lambda + a_i)v + h.o. -\mu v^k=0$. Therefore $\lambda$ satisfies the equation $ \prod_{i=1}^k (\lambda + a_i) - \mu =0$.
For a generic value of $\mu$, this equation admits $k_i$ distinct roots. This means that the curve $\widehat{\gamma}_i$ has exactly $k_i$ components and its strict transform by $\pi_0$ consists of $k_i$  curvettes of the $\cal P$-curve $E_i$ meeting $E_i$ at  $k_i$ distinct points.

Let $\delta_{i,1}$ and $\delta_{i,2}$ be two components of  $\widehat{\gamma}_i$ and let  $w = \lambda_1 v + h.o. $ and  $w = \lambda_2 v + h.o. $ be the Puiseux expansions of their strict transforms by $e$. Then the curve germs $\delta_{i,1}$ and $\delta_{i,2}$ of $(\C^3,0)$ are parametrized by 
$$ v  \longmapsto (\mu v^k, \mu v^{k+1}, \lambda_j \mu v^{k+1}), \quad j=1,2.$$
Since $\lambda_1 \neq \lambda_2$, this implies $q_o(\delta_{i,1},\delta_{i,2}) = \frac{k+1}{k}$.

On the other hand, by \cite[15.3]{NPP1}, the inner rate $q_i(\delta_{i,1},\delta_{i,2})$ equals the inner rate between two curvettes of $C_i$ meeting $C_i$ at distinct points. Since $C_i$ is the node of the  resolution tree $T_i$ of the $(k+1,k)$-cusp, we obtain $q_i(\delta_{i,1},\delta_{i,2})= \frac{k+1}{k}$.

Finally, $q_o(\delta_{i,1},\delta_{i,2})=q_i(\delta_{i,1},\delta_{i,2})$, so $\gamma$ satisfies Condition  $(2^*)$.

Assume now that $\gamma$ is a curvette of a curve $C'_i$ corresponding to a separating node and let $\widehat{\gamma}_i$ be as above. The curve $\widehat{\gamma}_i$ consists of $k_i$ curves whose strict transforms by $\pi_0$  pass through the $k_i$ points $p_{i,1}, \ldots, p_{i,k_i}$ of  $C_0X \cap E_i$. By Lemma~\ref{LNE along strings}, in order to prove that all pairs of components of $\widehat{\gamma}_i$ satisfy Condition~$(2^*)$, we have to prove that for all $r \neq  s$, $\widetilde{\lambda}(p_{i,r}) \neq  \widetilde{\lambda}(p_{i,s})$. Let $C_j$ be the component of $C_0X$  whose strict transform by the blow-up of $p_i$ satisfies $C_j \cap E_i = p_{i,j}$. The tangent line $L_{i,j}$ to $C_j$ at $p_i$ corresponds to a $2$-plane in $\grassmann(2,\C^n)$, and we have $\widetilde{\lambda}(p_{i,j}) = L_{i,j}$. Since the singularity $(C_0X,p_i)$ is ordinary, we have $L_{i,r} \neq L_{i,s}$ for all $r \neq s$, i.e., $\widetilde{\lambda}(p_{i,r}) \neq \widetilde{\lambda}(p_{i,s})$. This completes the proof.
\end{proof}


\begin{thebibliography}{ABC}
\bibitem{BM1} Lev Birbrair and Rodrigo Mendes, Arc criterion of normal embedding, in Singularities and Foliations. Geometry, Topology and Applications,  BMMS 2/NBMS 3, Salvador, Brazil, 2015, Springer Proceedings in Mathematics \& Statistics 222, 2018.

\bibitem{BMN} {Lev Birbrair, Rodrigo Mendes and Juan Jos\'e Nu\~ no-Ballesteros} Metrically un-knotted corank 1 singularities of surfaces in $\R^4$   

\bibitem{BNP} Lev Birbrair, Walter D Neumann and Anne   Pichon, The thick-thin decomposition and the bilipschitz classification of normal surface singularities, Acta Math. {212} (2014), 199--256.
 
\bibitem{L} Henry B Laufer,  Normal two-dimensional singularities, Ann. Math. Studies, 71 (1971).
  
\bibitem{F} { Alexandre Fernandes}, Topological equivalence of complex curves and bi-Lipschitz maps, Michigan Math. J. {51} (2003), 593--606.

\bibitem{FS} {  Alexandre Fernandes and Edson Sampaio}, Tangent cones of Lipschitz normally embedded sets are Lipschitz normally embedded. Appendix by  { Anne Pichon and Walter Neumann}', IMRN rnx290, https://doi.org/10.1093/imrn/rnx290.

\bibitem{GS} {Gerardo Gonzalez-Sprinberg}, R\'esolution de Nash des points doubles rationnels,  Ann. Inst. Fourier (Grenoble) {32} (1982),  111--178. 

\bibitem{KMR} { Dmitry Kerner, Helge M\o ller Pedersen and Maria A. S. Ruas} Lipschitz normal embeddings in the space of matrices,   Math. Z. (2018). https://doi.org/10.1007/s00209-017-2027-4

\bibitem{LW}  L\^e D\~ung Tr\'ang  and Claude  Weber, \'Equisingularit\'e dans les pinceaux de germes de courbes planes et $C_0$-suffisance, L'Enseignement Math\'ematique, vol. 43, no. 3/4, p. 355--380.

\bibitem{MR} { Helge M\o ller Pedersen and Maria Aparecida Soares Ruas}, Lipschitz normal embeddings and determinantal singularities, arXiv:1607.07746.

\bibitem{NP} Walter D Neumann and Anne Pichon, Lipschitz geometry of complex curves, Journal of Singularities volume {\bf 10} (2014), 225-234.

\bibitem{NP2} Walter D Neumann and Anne Pichon,  Lipschitz geometry does not determine embedded topological type, Singularities in geometry, topology, foliations and dynamics, 183--195, Trends Math., Birkhäuser/Springer, Cham, 2017.

\bibitem{NPP1} {Walter Neumann, Helge M\o ller Pedersen   and Anne Pichon}, A characterization of Lipschitz normally embedded surface singularities (2018), arXiv:1806.11240

\bibitem{NPP2} {Walter Neumann, Helge M\o ller Pedersen   and Anne Pichon}, Minimal singularities are Lipschitz normally embedded, (2018), arXiv:1503.03301.

\bibitem{PT} Fr\'ed\'eric Pham and Bernard Teissier, Fractions Lipschitziennes d'une algebre analytique complexe et saturation de Zariski.  Pr\'epublications Ecole Polytechnique No. M17.0669 (1969). Available at http://hal.archives-ouvertes.fr/hal-00384928/fr/

\bibitem{S} {Mark Spivakovsky}, Sandwiched singularities and desingularization of surfaces by normalized Nash transformations, Ann. of Math. 131 (1990), 411--491.

\bibitem{teissier} {Bernard Teissier}, Vari\'et\'es polaires II, Multiplicit\'es polaires, sections planes et conditions de Whitney, in: Algebraic Geometry, Proc. La Rabida 1981, Lectures Notes in Math., Vol. 961 (Springer-Verlag, 1982), 314--491.

\end{thebibliography}
\end{document}